\newtheorem{theorem}{Theorem}
\newtheorem{lemma}[theorem]{Lemma}
\newtheorem{proposition}[theorem]{Proposition}
\newtheorem{claim}[theorem]{Claim}
\theoremstyle{definition}
\theoremstyle{remark}
\newcommand{\BB}{{\mathbb B}}
\newcommand{\la}{{\lambda}}
\newcommand{\DD}{{\mathbb D}}
\newcommand{\OO}{{\mathcal O}}
\newcommand{\uU}{{\mathcal U}}
\newcommand{\NN}{{\mathbb N}}
\newcommand{\RR}{{\mathbb R}}
\newcommand{\CC}{{\mathbb C}}
\newcommand{\eps}{\varepsilon}
 \DeclareMathOperator{\re}{Re}
 \DeclareMathOperator{\dist}{dist}
\DeclareMathOperator{\diag}{diag}
\renewcommand{\phi}{\varphi}
\subjclass[2010]{32E30, 30E05}
\begin{document}

\title{Nevanlinna-Pick interpolation problem in~the ball}

\address{Institute of Mathematics, Faculty of Mathematics and Computer Science, Jagiellonian
University,  \L ojasiewicza 6, 30-348 Krak\'ow, Poland}

\author{\L ukasz Kosi\'nski}\email{lukasz.kosinski@im.uj.edu.pl}
\author{W\l odzimierz Zwonek}\email{wlodzimierz.zwonek@im.uj.edu.pl}
\thanks{The first author was supported by the NCN grant UMO-2014/15/D/ST1/01972.}

\thanks{The second author was  supported by the
OPUS grant no. 2015/17/B/ST1/00996 financed by the National
Science Centre, Poland.}
\keywords{}

\begin{abstract}
We solve a three point Nevanlinna-Pick problem in the Euclidean ball. In particular, we determine a class of rational functions that interpolate this problem.
\end{abstract}
\maketitle
\section{Introduction}

The Nevanlinna-Pick problem for a domain $D$ of $\mathbb C^n$ may be stated as follows: {\it Given distinct points $z_1,\ldots, z_N$ in $D$ and numbers $\la_1,\ldots, \la_N$ in the unit disc $\DD$ decide whether there is an analytic function $F:D\to \DD$ that interpolates, i.e. $F(z_j) = \la_j$, $j=1, \ldots, N$.} The problem is very classical, its original version was stated for $D=\DD$ and solved by Pick in 1916 (see \cite{Pick}).

This problem has been considered in different domains and many attempts have been made to extend it in different directions. In general, the analogue of Nevanlinna-Pick's theorem does not hold for domains other than the disc. So far it is not clear how to get any solvability criterion for an arbitrary domain $D$. An important result towards understanding this problem was achieved by Sarason who found deep relations between the Nevanlinna-Pick problem and several results in operator theory (see \cite{Sar}). Cole, Lewis and Wermer in \cite{Cole} considered the problem for any uniform algebra.
In a sequence of influential papers (see \cite{Agl1, Agl2, Agl3, Agl4} and also a monograph \cite{Aglbook}) Agler and McCarthy used operator theory approach to carry out an analysis of Nevanlinna-Pick problem for the bidisc. However, methods developed there did not work for any other domains. They even failed for higher dimensional polydiscs $\DD^n$. Some results for $\DD^n$ and the Euclidean ball $\BB_n$ were obtained by Hamilton \cite{Ham}. Interpolation in the Euclidean ball was also investigated by Amar and Thomas \cite{Ama, Ama1}. Recently, the first author of this paper found an alternate approach to the Nevanlinna-Pick problem in the polydisc (see \cite{Kos 2015}) and $N=3$ which resulted in solving the problem in this situation. This approach also allowed Knese to prove the von Neumann inequality for $3\times 3$ matrices (see \cite{Kne 2015}). In our paper we adopt the methods from \cite{Kos 2015} to deal with the Nevanlinna-Pick problem for the Euclidean ball.

\bigskip

Roughly speaking we show that a three-point Nevanlinna-Pick problem in the Euclidean ball may be expressed in terms of a dual problem $\DD\to \BB_n$. We also find a class of rational functions of degree at most $2$ interpolating every such problem. Extremal functions in this class are, up to a composition with an automorphism of $\BB_n$, of the form \eqref{class FD} or \eqref{class FND}. The precise statement of the result is postponed to the next section.

\section{Definitions and results}

\subsection{Interpolation problems. Extremal maps}

The definitions we present here are taken from \cite{Kos-Zwo 2015} and \cite{War 2015}.

Let $D$ be a domain in $\mathbb C^n$ and let $N\geq 2$. Fix pairwise distinct points
$\lambda_1,\ldots,\lambda_N\in\DD$ and points $z_1,\ldots,z_N\in D$. We call the interpolation data
\begin{equation}\nonumber
\DD\to D,\quad \lambda_j\mapsto z_j,\; j = 1,\ldots,N,
\end{equation}
 {\it extremally solvable} if it is solvable i.e. there is a map $h\in\OO(\DD,D)$ such
that $h(\lambda_j ) = z_j $, $j = 1,\ldots, N$, and there is no $f$ holomorphic on a neighborhood of
$\overline{\DD}$ with the image in $D$ such that $f(\lambda_j ) = z_j$, $j = 1,\ldots, N$.

Note that the latter condition is equivalent to the fact that there is no $h\in
\OO(\DD,D)$ such that $h(\lambda_j ) = z_j$, $j = 1, \ldots, N$, and $h(\DD)$ is relatively compact in $D$.

This leads us to the following definition (see \cite{Kos-Zwo 2015}). Let $f :\DD\to D$ be an analytic disc. Let $\lambda_1,\ldots,\lambda_N\in D$ be pairwise distinct points. We say that $f$ is a {\it weak $N$-extremal with respect to $\lambda_1, \ldots,\lambda_N$} if the problem
\begin{equation}\nonumber
\DD\to D,\quad \lambda_j\mapsto f(\lambda_j),\; j=1,\ldots,N,
\end{equation}
is extremally solvable.

Naturally, we shall say that $f$ is a {\it weak $N$-extremal} if it is a weak extremal with
respect to some $N$ pairwise distinct points in the unit disc.

The idea of the above definition has roots in \cite{Agl-Lyk-You 2013} where authors introduced the
notion of {\it $N$-extremal maps}, demanding that the above problem is extremal for all choices of
pairwise distinct points $\lambda_1,\ldots,\lambda_N$. Our definition of extremals is weaker; however, for many domains classes of $N$-extremals and weak $N$-extremals coincide (see \cite{Kos-Zwo 2015}). This is the
case for among others homogenous (i.e. with transitive group of holomorphic automorphisms)
and balanced domains. In particular, both definitions are equivalent
for the Euclidean ball. Similar maps were also investigated by Edigarian \cite{Edi 1995}.

The dual problem to the one presented above (we call it the \textit{$N$-Pick problem for $D$}) is to interpolate the following problem
\begin{equation}\nonumber
D\to\DD,\quad z_j\mapsto\lambda_j,\;j=1,\ldots,N,
\end{equation}
i.e. to find an $F\in\OO(D,\DD)$ such that $\lambda_j=F(z_j)$, $j=1,\ldots,N$. The problem is {\it extremal } if there is no $G\in\OO(D,\DD)$ 
with $G(z_j)=\lambda_j$, $j=1,\ldots,N$, and the image $G(D)$ lies relatively compactly in $\DD$.

The two problems present two different generalizations of the classical Nevanlinna-Pick problem in the unit disc which are mutually dual.

In the paper we show a very close relation between them for $D=\mathbb B_n$ and $N=3$.

\subsection{Case $N=2$. Lempert theorem}\label{sec: N=2}
Recall that in the case $N=2$ the mutual relationship between the above two problems is well understood. In the case of the ball and the polydisc the description of extremal problems is obtained very easily. In a much more general case of bounded convex domains the same description of the extremality of the problems
\begin{equation}\label{NP-problem}
D\to\DD,\quad z_j\mapsto\lambda_j,\; j=1,2,
\end{equation}
and 
\begin{equation}\label{extremal-problem}
\DD\to D,\quad \mu_j\mapsto z_j,\; j=1,2,
\end{equation}
is a consequence of the Lempert theorem (see \cite{Lem 1981}). More precisely, for the fixed $z_1,z_2\in D$ the extremality of \eqref{NP-problem} and (\ref{extremal-problem}) implies the existence of the interpolating functions
$f\in\OO(\DD,D)$ and $F\in\OO(D,\DD)$ such that $F\circ f$ is an automorphism of $\DD$ (in the language we shall use, Blaschke products of degree one). 
Moreover, for the given distinct points $z_1,z_2\in D$ the existence of the extremal interpolation function $F$ and corresponding points $\lambda_1,\lambda_2$ in the problem \eqref{NP-problem} (respectively, the existence of the extremal interpolation function $f$ and $\mu_1,\mu_2$ in the problem \eqref{extremal-problem}) implies the existence of $f$ and $\mu_1,\mu_2$ (respectively, $F$ and $\lambda_1,\lambda_2$) as above. One of our aims is to find an analogue to this description in the case of $N=3$ and $D$ being the unit ball $\BB_n$.

\subsection{Complex $N$-geodesics} 

The above observations make us recall the following definition (see \cite{Kos-Zwo 2015}). 
 An analytic disc $f :\DD\to D$ is called a
{\it complex $N$-geodesic} if there is a holomorphic function $F : D\to\DD$ such that $b:=F\circ f$
is a non-constant Blaschke product of degree at most $N-1$.
The function $F$ is called a {\it left inverse} to the complex $N$-geodesic $f$.

Recall that if $f :\DD\to D$ is a complex $N$-geodesic, $F: D\to\DD$ is its left
inverse and $b:=F\circ f$ then for any pairwise distinct
$\lambda_1,\ldots,\lambda_N\in\DD$ the interpolation problems  
\begin{equation}\nonumber
D\to\DD,\quad f(\lambda_j )\mapsto b(\lambda_j ),\; j = 1,\ldots,N,
\end{equation}
and
\begin{equation}\nonumber
\DD\to D,\quad \lambda_j\mapsto f(\lambda_j),\; j=1,\ldots,N,
\end{equation}
are extremal.

\subsection{Solution of the extremal $3$-point problem in the ball}
 
In our paper we deal with $D:=\BB_n$ and $N=3$.
The main result of the paper is the following.

\begin{theorem}\label{theorem-main} Let $F\in\OO(\BB_n,\DD)$ and let $z_1,z_2,z_3\in\BB_n$ be pairwise different. 
A $3$-point Pick problem in $\BB_n$ 
\begin{equation}\nonumber
\BB_n\to\DD,\quad z_j\mapsto\lambda_j:=F(z_j),\; j=1,2,3,
\end{equation}
is extremal if and only if there is an $f\in\OO(\DD,\BB_n)$ such that either $f$ passes through the points  $z_1,z_2,z_3$ 
and $F\circ f$ is a non-constant Blaschke product of degree  at most $2$ or $f$ passes through at least two of the points $z_1,z_2,z_3$ and $F\circ f$ is a Blaschke product of degree $1$.
\end{theorem}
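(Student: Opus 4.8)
The plan is to prove the two implications separately.

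For ``$\Leftarrow$'' suppose such an $f$ exists. Solvability of the Pick problem is clear, since $F$ itself interpolates. If $f$ passes through $z_1,z_2,z_3$ and $b:=F\circ f$ is a non-constant Blaschke product of degree at most $2$, then $f$ is a complex $3$-geodesic with left inverse $F$; choosing $\mu_1,\mu_2,\mu_3\in\DD$ with $f(\mu_j)=z_j$, the property of complex $N$-geodesics recalled above (with $N=3$) gives that the problem $\BB_n\to\DD$, $f(\mu_j)\mapsto b(\mu_j)=F(z_j)=\la_j$, is extremal, and this is precisely our problem. If instead $f$ passes through (say) $z_1,z_2$ with $F\circ f$ of degree $1$, then $f$ is a complex $2$-geodesic with left inverse $F$, and the same recalled property (with $N=2$) shows that the $2$-point subproblem $z_1\mapsto\la_1,\ z_2\mapsto\la_2$ is extremal. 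Since any $G\in\OO(\BB_n,\DD)$ interpolating all three points with $\overline{G(\BB_n)}$ relatively compact in $\DD$ would interpolate this subproblem with relatively compact image, extremality of the subproblem forces extremality of the full problem.

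For ``$\Rightarrow$'' assume the $3$-point Pick problem is extremal. The first move is a reduction of dimension. After composing with an automorphism of $\BB_n$ we may assume $z_1=0$; then the complex linear span $L$ of $z_2,z_3$ has dimension $k\le2$, the slice $\Pi:=\BB_n\cap L$ is (linearly isometric to) $\BB_k$, and the orthogonal projection $\rho\colon\BB_n\to\Pi$ is a holomorphic retraction fixing $z_1,z_2,z_3$. Restricting interpolants along $\Pi\hookrightarrow\BB_n$ and extending them along $\rho$ shows that $\inf_G\sup_{\BB_n}\abs{G}$ over interpolants on $\BB_n$ equals the analogous infimum over interpolants on $\Pi$; hence the problem for $F|_\Pi$ on $\Pi$ is again extremal, and any disc $f\colon\DD\to\Pi$ with the asserted properties relative to $F|_\Pi$ gives, via $\Pi\hookrightarrow\BB_n$, such a disc for $F$. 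Thus we may assume $n=k\in\{1,2\}$, $z_1=0$, and, after an automorphism of $\DD$, $\la_1=0$. If $k\le1$ — equivalently, $z_1,z_2,z_3$ are collinear — the problem becomes a $3$-point Pick problem in $\DD$; its extremality forces the Pick matrix to be positive semidefinite and singular, so there is a unique interpolant, equal to a Blaschke product $b$ of degree $1$ or $2$, reading $F|_\Pi$ in the disc coordinate identifies it with $b$, and taking $f$ to be the biholomorphism $\DD\to\Pi$ (which passes through all three points) completes this case.

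The remaining, and principal, case is $k=2$: $z_1=0$, $\la_1=0$, $z_2,z_3$ linearly independent, and $F\colon\BB_2\to\DD$ extremal. Here I would follow the scheme of \cite{Kos 2015}: exploiting $F(0)=0$ through a Schwarz/Schur-type reduction adapted to the ball, together with the extremality constraint, one shows that $F$ is, up to an automorphism of $\BB_2$, a rational map of degree at most $2$ in one of the two normal forms \eqref{class FD}, \eqref{class FND}. From such a normal form the disc $f$ is read off directly: in the non-degenerate case $f$ is the analytic disc for which $F$ is a left inverse, $F\circ f$ is a Blaschke product of degree $2$, and $f$ passes through all three $z_j$; in the degenerate case the extremality is already produced by two of the nodes, $f$ is the complex geodesic through those two and $F\circ f$ has degree $1$. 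I expect the main obstacle to be exactly this last step — showing that extremality of a genuinely two-dimensional $3$-point problem forces $F$ into one of the two explicit rational forms — which is where the realisation/operator-theoretic machinery behind \cite{Kos 2015} enters.
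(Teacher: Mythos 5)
Your ``$\Leftarrow$'' direction and the opening reductions (composing with automorphisms, slicing to a two-dimensional ball, and the collinear case, which the paper treats as the set $\mathcal C$ inside the proof of Theorem~\ref{theorem-main}) are sound and consistent with the paper. The problem is the principal case, which you do not prove but defer to ``the scheme of \cite{Kos 2015}'' via an alleged normal form for $F$; this route fails as stated. Your intermediate claim --- that extremality forces the \emph{given} $F$ to be, up to an automorphism of $\BB_2$, one of the rational forms \eqref{class FD} or \eqref{class FND} --- is false: interpolants of extremal $3$-point problems in the ball are badly non-unique, as the paper itself emphasizes with the non-rational left inverses $F_\varepsilon$ of Lemma~\ref{lem:nonunique} and the family $F_{\tau,\omega}$ of \eqref{class F11}, so no Schwarz/Schur-type normalization can pin $F$ down to those classes. (Also, \cite{Kos 2015} is precisely the geometric--topological alternative to the operator-theoretic/realization machinery, not an instance of it.)

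What the paper proves is a statement about the \emph{data}, from which the claim about an arbitrary interpolant $F$ follows. In the degenerate non-collinear case, Lempert's theorem already yields a geodesic $f$ through two of the nodes, and then $F\circ f$ is a M\"obius map for any interpolant. In the non-degenerate case the entire content is Theorem~\ref{theorem-non-degenerate}: every non-degenerate datum off $\mathcal C$ lies in the range of $\Phi$, so the three nodes sit on a $3$-geodesic $\phi_{a,U,c}$ whose target values are those of the degree-$2$ Blaschke product $\la\mapsto\la m_\gamma(\la)$; since the induced $3$-point problem in $\DD$ is extremal, its solution is unique, hence $F\circ \phi_{a,U,c}$ equals that Blaschke product for the given $F$. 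Establishing Theorem~\ref{theorem-non-degenerate} is the real work: an open--closed--connected argument (Lemmas~\ref{lem:open} and \ref{lemma:closed}, plus connectedness of $\mathcal A\setminus\mathcal C$ via Lemmas~\ref{lem: connectfiber} and \ref{lem: connect}), where the connectedness step itself relies on the full description of the degenerate set in Theorem~\ref{theorem-degenerate}. None of this machinery, nor any substitute for it, appears in your proposal, so the central implication of the theorem remains unproved.
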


Actually, we prove much more than it is formulated above. Before we may state the more general and more detailed results we need to introduce some notions and recall known facts.

\subsection{Geometry of the unit ball}

To proceed with the presentation of main results let us recall some well-known facts on the unit ball (see e. g. \cite{Jar-Pfl 2013} and \cite{Rud 1980}). First recall that the group of automorphisms of $\BB_n$ is generated by idempotent mappings $\chi_w$, $w\in\BB_n$,  and unitary mappings. For the fixed $w$ the mapping $\chi_w$ is the automorphism interchanging $w$ and $0$, $w\in\BB_n$. In the special case of the unit disc ($n=1$) we put $m_a:=\chi_a$, $a\in\DD$. Then $m_a$ is the idempotent M\"obius map. Recall that the automorphisms of the unit ball map the parts of complex lines lying in the ball onto the same type of sets. Moreover, any three points from the ball may be mapped by some automorphism into a given two-dimensional intersection of the affine subspace with the ball.

We also need to know the effective formula for the Carath\'eodory distance of the unit ball - the uniquely determined holomorphically invariant function (for the definition of the Carath\'eodory distance and its properties see e.g. \cite{Jar-Pfl 2013}):
\begin{equation}\nonumber
c_{\BB_n}^*(w,z)=\sqrt{1-\frac{(1-||w||^2)(1-||z||^2)}{|1-\langle w,z\rangle|^2}},\; w,z\in\BB_n,
\end{equation}
where $\langle\cdot,\cdot\rangle$ denotes the standard inner product in $\CC^n$.

\subsection{Reformulation of the extremal $3$-point Pick problem in the ball}\label{sec: refolmulation}

 Thanks to the transitivity of the group of automorphisms of the ball while considering the $3$-point Pick problem in the ball we may,
 without loss of generality, restrict ourselves to the following problem
\begin{equation}\nonumber
\BB_n\to\DD,\quad 0\mapsto 0,\; z\mapsto\sigma,\; w\mapsto\tau,
\end{equation}
 where $z\neq w$, $z\neq 0$, $w\neq 0$ and $(\sigma,\tau)\neq(0,0)$.

Let us come back to the formulation of our main problem. In order to do it we repeat a reasoning from \cite{Kos 2015} which allows us to formulate the problems in a more handy way.

Let us denote
$\mathcal D_n:= \{(z, w)\in\BB_n\times\BB_n:z\neq w,z\neq 0, w\neq 0\}$.

A standard Montel-type argument shows that for any $(z, w)\in\mathcal D_n$ and any
$(\sigma,\tau)\in\CC^2\setminus\{(0,0)\}$  there is exactly one $t = t_{z,w,(\sigma,\tau)}>0$ such that the problem
\begin{equation}\label{NP-problem-ball}
\BB_n\to\DD,\quad 0\mapsto 0,\; z\mapsto t\sigma \text{ and } w\mapsto t\tau
\end{equation}
is extremal. It is simple to see that
the mapping
\begin{equation}\nonumber
\mathcal D_n\times(\CC^2\setminus\{(0,0)\})\owns (z, w,\sigma,\tau)\mapsto t_{z,w,(\sigma,\tau)}\in\RR_{>0}
\end{equation}
is continuous.
Moreover, for fixed nodes $z$, $w$ the mapping
$[\sigma:\tau]\mapsto (t_{z,w,(\sigma,\tau)}\sigma, t_{z,w,(\sigma,\tau)}\tau)$
gives a $1-1$ correspondence between the projective plane $\mathbb P^1$ and the set of target
points for which (\ref{NP-problem-ball}) is extremally solvable modulo a unimodular constant.
Saying about the extremal $3$-point Pick problem corresponding to the data $(z, w, [\sigma :\tau])$ we mean the problem
\begin{equation}\nonumber
\BB_n\to\DD,\quad 0\to 0,\; z\mapsto t_{z,w,(\sigma,\tau)}\sigma,\; w\mapsto t_{z,w,(\sigma,\tau)}\tau.
\end{equation}

In particular, the target points $t_{z,w,(\sigma,\tau)}\sigma$ and $t_{z,w,(\sigma,\tau)}\tau$ are determined up to a
unimodular constant.

\subsection{Degenerate and non-degenerate cases}
An extremal $3$-point Pick problem in $D$
\begin{equation}\nonumber
D\to\DD,\quad z_j\mapsto\lambda_j,\; j=1,2,3,
\end{equation}
is called \textit{non-degenerate} if no $2$-point subproblem is extremal.

We divide the set $\mathcal D_n\times\mathbb P^1$ into three sets $\mathcal A$, $\mathcal B$ and $\mathcal C$ with $\mathcal A\cap\mathcal B=\emptyset$, $\mathcal A\cup\mathcal B=\mathcal D_n$ and $\mathcal A$ open.

We say that an element $(z,w,[\sigma:\tau])\in \mathcal D_n\times \mathbb P^1$ belongs to the set $\mathcal A$ (respectively, $\mathcal B$) if and only if its corresponding extremal
$3$-point Pick problem is non-degenerate (respectively, degenerate). Moreover, we define $\mathcal C$ to consist of points $(z,w,[\sigma:\tau])$ such that $0,$ $z$ and $w$ lie in the range of a $2$-extremal. In other words this means that points $0$, $z$ and $w$ are co-linear. It is clear that $\mathcal C$ is a proper analytic set.

Our aim will be the effective description of $\mathcal B$ from which we shall conclude that 
$\mathcal A\setminus \mathcal C$ is connected and thus so is $\mathcal A$. We shall see that for any extremal function $F\in\OO(\BB_n,\DD)$ corresponding to the extremal $3$-point Pick problem $(w,z,[\sigma:\tau])$ there will be an $f:\DD\to\BB_n$ such that $F$ is a left inverse to $f$ (and thus $f$ is a complex $3$-geodesic). 
In the case $(z,w,[\sigma:\tau])\in\mathcal B$ we may effectively pick the extremal function $F$ from a given class of functions (to be defined later) whereas the existence of extremal functions $f$ corresponding to the extremals $F$ for points from $\mathcal A$ will follow from some topological argument (relying on connectivity of $\mathcal A\setminus\mathcal C$) - this idea is the same as in the recent paper \cite{Kos 2015} concerning the same problem but in the polydisc. In the latter case we shall be able to find the class of extremal functions, too.

\subsection{Description of the degenerate case}

The description of the set $\mathcal B$ is given below. Note that the degeneracy of the extremal $3$-point Pick problem
$$\mathbb B_n\to\DD,\; w\mapsto F(w),\; z\mapsto F(z),\; u\mapsto F(u),$$ where $F\in \mathcal O(\BB_n,\DD)$, means that for two distinct points, say $w,z$, we have $c_{\BB_n}^*(w,z)=c_{\DD}^*(F(w),F(z))$. Composing nodes with automorphisms of $\BB_n$ we lose no generality assuming that $w=0$, $F(0)=0$
and $F(z_1,0^{\prime})=z_1$ for some, and consequently applying the Schwarz lemma, for any $z_1\in\DD\setminus\{0\}$. Therefore, the description of $\mathcal B$ reduces to the description of the possible values of $F(u)$ for fixed $u\in\BB_n$ where $F\in\mathcal O(\mathbb B_n,\DD)$ satisfies $F(z_1,0)=z_1$, $z_1\in\DD$.

In other words the problem of description of $\mathcal B$ reduces to the description, for the fixed $w\in\BB_n$, 
of the following set
\begin{equation}\nonumber
\mathcal B(w):=\{F(w):F\in\mathcal O(\BB_n,\DD),F(z_1,0^{\prime})=z_1,z_1\in\DD\}.
\end{equation}
One more reduction is possible, due to the form of the group of automorphisms of $\BB_n$, the point $w$ may be assumed to be from $\BB_2\times\{0\}$ - it is therefore sufficient to discuss the case $n=2$.

Summarizing the set $\mathcal B(w)$ comprises the possible values $\sigma$ of the degenerate extremal $3$-point Pick problem in $\BB_2$ 
\begin{equation}\nonumber
\BB_2\to\DD,\quad (0,0)\mapsto 0,\; (z_1,0)\mapsto z_1,\; w\mapsto\sigma
\end{equation}
for some (any) $z_1\in\DD_*$.

Note that having given a function $G\in\OO(\BB_2,\DD)$ such that $G(z_1,0^{\prime})=z_1$, $z_1\in\DD$  any function of the form
\begin{equation}\nonumber
z\mapsto \overline{\tau}G(\tau z_1,\omega z_2)
\end{equation} 
where $|\tau|=1$, $|\omega|\leq 1$ maps $\BB_2$ to $\DD$ and points $(z_1,0)$ to $z_1$.

Define 
\begin{equation}\nonumber
F_{1,1}(z):=\frac{2z_1(1-z_1)-z_2^2}{2(1-z_1)-z_2^2},\;z\in\BB_2.
\end{equation}
Note that $F_{1,1}\in\OO(\BB_2,\DD)$ - it is a straightforward consequence of elementary transformations. It is also worth mentioning that we deduced the formula for the function $F_{1,1}$ as a function playing a special role in describing the set $\mathcal B(w)$ using the fact that it lies in the closure of the set of non-degenerate points. We also point out that the functions 

For $|\tau|=1$, $|\omega|\leq1$ define
\begin{equation}\label{class F11}
F_{\tau,\omega}(z):=\overline{\tau}F_{1,1}(\tau z_1,\omega z_2)=\frac{2z_1(1-\tau z_1)-\overline{\tau}\omega^2z_2^2}{2(1-\tau z_1)-\omega^2 z_2^2}.
\end{equation} 
Then $F_{\tau,\omega}\in\OO(\BB_2,\DD)$ and $F_{\tau,\omega}(z_1,0)=z_1$. Let us draw Reader's attention to the fact that the functions just introduced are examples showing non-uniqueness of left inverses to complex geodesics in the ball much simpler than the one studied in \cite{Kos-Zwo 2016}. 

The fact that $\mathcal B(w)$ has non-empty interior is one of the main results of \cite{Ama}. Our next theorem precisely describes this set.

\begin{theorem}\label{theorem-degenerate} Let $w\in\BB_2$. Then
\begin{multline}
\mathcal B (w)=m_{w_1}\left(\mathcal B\left(0,\frac{w_2}{\sqrt{1-|w_1|^2}}\right)\right)=\\
m_{w_1}
\left(\overline{\triangle}\left(0,\frac{|w_2|^2}{2-2|w_1|^2-|w_2|^2}\right)\right).
\end{multline}
In particular, the set $\mathcal B(w)$ is a closed Euclidean disc.

Moreover, the extremal $3$-point Pick interpolating functions in the degenerate case may be chosen from a nice class of domains. More precisely,
\begin{equation}\nonumber
\mathcal B(w)=\{F_{\tau,\omega}(w):|\tau|=1,|\omega|\leq 1\}.
\end{equation}
\end{theorem}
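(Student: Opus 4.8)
The plan is to exploit the two reductions described in the text so that everything comes down to a concrete computation in $\BB_2$, and then to prove the two inclusions defining $\mathcal B(0,w_2')$ separately. First I would verify the equivariance identity $\mathcal B(w)=m_{w_1}(\mathcal B(0,w_2/\sqrt{1-|w_1|^2}))$. The key point is that if $F\in\OO(\BB_2,\DD)$ satisfies $F(z_1,0)=z_1$, then $m_{F(w_1,0)}\circ F=m_{w_1}\circ F$ composed with the automorphism $\chi$ of $\BB_2$ fixing the first coordinate slice and moving $(w_1,0)$ to $(0,0)$ again satisfies the normalization $z_1\mapsto z_1$; one checks that $\chi$ maps $(w_1,w_2)$ to $(0,w_2/\sqrt{1-|w_1|^2})$ (this is the standard formula for $\chi_{(w_1,0)}$ restricted appropriately). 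Conversely every normalized $G$ on the slice-through-origin pulls back to a normalized $F$. Hence it suffices to compute $\mathcal B(0,w_2)$, i.e. the set of values $F(0,w_2)$ over all $F\in\OO(\BB_2,\DD)$ with $F(z_1,0)=z_1$.

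Next I would prove the inclusion $\mathcal B(0,w_2)\subseteq\overline{\triangle}(0,|w_2|^2/(2-|w_2|^2))$. Given such an $F$, consider the function $z_2\mapsto F(0,z_2)$. It maps $\DD\to\DD$ and sends $0\mapsto 0$, so by the Schwarz lemma $|F(0,z_2)|\le|z_2|$ — but this only gives the weaker bound $|w_2|$. To get the sharp bound I would use more of the normalization: expand $F$ near the origin and use that $\partial F/\partial z_1(0)=1$ together with $F(z_1,0)=z_1$ forces, via the Schwarz–Pick lemma applied to $F$ itself at points $(z_1,0)$, that the full differential of $F$ at $0$ is $(1,0)$ and that the relevant second-order term is controlled. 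Concretely, writing $g(\zeta):=F(0,\zeta\sqrt{\,\cdot\,})$ or better applying the Julia–Carathéodory / Schwarz lemma at order two along the $z_2$-axis after dividing out the disc-automorphism built from the $z_1$-behaviour, one obtains $|F(0,z_2)|\le|z_2|^2/(2-|z_2|^2)$. The cleanest route is probably: the map $(z_1,z_2)\mapsto m_{z_1}^{-1}(F(z_1,z_2))$ vanishes on $\{z_2=0\}$, hence factors as $z_2\cdot h(z_1,z_2)$ with $h\in\OO(\BB_2,\overline\DD)$ after using that $1-|m_{z_1}^{-1}\!\circ F|^2$ dominates $(1-|z_1|^2)(1-\text{something})$; then evaluating $h(0,w_2)$ and inverting the Möbius map yields exactly the claimed disc. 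I expect this sharp second-order estimate to be the main obstacle — the first-order Schwarz lemma is immediate but squeezing out the factor $2-|w_2|^2$ in the denominator requires carefully tracking how the constraint $F(z_1,0)=z_1$ on a whole disc, not just at one point, constrains the mixed behaviour of $F$.

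Then I would prove the reverse inclusion $\mathcal B(0,w_2)\supseteq\overline{\triangle}(0,|w_2|^2/(2-|w_2|^2))$ by exhibiting interpolants, and this is where the family $F_{\tau,\omega}$ from \eqref{class F11} enters. A direct computation gives $F_{\tau,\omega}(0,w_2)=-\overline\tau\omega^2 w_2^2/(2-\omega^2 w_2^2)$; as $\omega$ ranges over $\overline\DD$ and $\tau$ over the unit circle, I would check that the value $-\overline\tau\omega^2 w_2^2/(2-\omega^2 w_2^2)$ sweeps out precisely the closed disc $\overline\triangle(0,|w_2|^2/(2-|w_2|^2))$ — the map $s\mapsto -s/(2-s)$ is a Möbius transformation sending the disc $\{|s|\le|w_2|^2\}$ (which is the image of $\omega^2 w_2^2$ as $|\omega|\le1$, after also rotating by $\overline\tau$) onto that disc. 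Since each $F_{\tau,\omega}\in\OO(\BB_2,\DD)$ and satisfies $F_{\tau,\omega}(z_1,0)=z_1$ as already noted in the text, this simultaneously proves surjectivity onto $\mathcal B(0,w_2)$ and establishes the final assertion $\mathcal B(w)=\{F_{\tau,\omega}(w):|\tau|=1,|\omega|\le1\}$ after conjugating back by $m_{w_1}$; one only needs to observe that conjugation by $m_{w_1}$ intertwines the normalized families on the two slices, which follows from the equivariance established in the first step. Finally, the ``closed Euclidean disc'' statement is then immediate since a Möbius image of a closed disc not containing the pole is again a closed disc, and $m_{w_1}$ is such a Möbius map.
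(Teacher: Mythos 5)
Your reduction to the slice $\{z_1\text{-axis}\}$ via $G=m_{w_1}\circ F\circ\chi_{(w_1,0)}$ and your construction of interpolants (computing $F_{\tau,\omega}(0,w_2)=-\overline{\tau}\omega^2w_2^2/(2-\omega^2w_2^2)$ and sweeping the disc by rotating with $\tau$) are correct and coincide with the paper's argument. The genuine gap is precisely the step you yourself flag as ``the main obstacle'': the inclusion $\mathcal B(0,w_2)\subseteq\overline{\triangle}\bigl(0,|w_2|^2/(2-|w_2|^2)\bigr)$, and neither of your sketched routes closes it. Even granting that the differential of $F$ at $0$ equals $(1,0)$ (which does follow from $F(z_1,0)=z_1$), the Schwarz lemma applied to $\zeta\mapsto F(0,\zeta)$ only yields $|F(0,w_2)|\le|w_2|^2$, which is strictly weaker than $|w_2|^2/(2-|w_2|^2)$ because $2-|w_2|^2>1$; so no finite-order information at the origin alone produces the sharp constant. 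The factorization route fares no better: writing $m_{z_1}(F(z_1,z_2))=z_2\,h(z_1,z_2)$, the Schwarz--Pick lemma on the slice $\zeta\mapsto F(z_1,\zeta\sqrt{1-|z_1|^2})$ gives only $|h(z_1,z_2)|\le(1-|z_1|^2)^{-1/2}$, not $|h|\le1$, and even the bound $|h|\le1$ would merely reproduce $|F(0,w_2)|\le|w_2|$ at the point $(0,w_2)$. What actually extracts the denominator $2-|w_2|^2$ in the paper is a boundary (Julia-type) limit along the first-coordinate disc: with $x:=F(0,z_2)\ge0$, contractivity of the Carath\'eodory distance gives $c_{\DD}^*(z_1,x)\le c_{\BB_2}^*((z_1,0),(0,z_2))$ for \emph{all} $z_1\in(-1,1)$; rewriting this as a polynomial inequality in $z_1$ and letting $z_1\to-1$ yields $x^2(2-z_2^2)+2x(1-z_2^2)-z_2^2\le0$, i.e. $x\le z_2^2/(2-z_2^2)$. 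Without this (or an equivalent) argument the inclusion, and hence the whole description of $\mathcal B(w)$, remains unproved.

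A secondary point: your final assertion that ``conjugation by $m_{w_1}$ intertwines the normalized families on the two slices, which follows from the equivariance'' is not justified as stated. Equivariance only shows that $m_{w_1}\circ F\circ\chi_{(w_1,0)}$ is again normalized; it does not show it stays inside the special family $\{F_{\tau,\omega}\}$, which is what the identity $\mathcal B(w)=\{F_{\tau,\omega}(w):|\tau|=1,\ |\omega|\le1\}$ requires. The paper settles this by the explicit computation $m_{w_1}\circ F_{\tau,\omega}\circ\chi_{(w_1,0)}=F_{\tilde\tau,\tilde\omega}$ with $\tilde\tau=\frac{\overline{w}_1-\tau}{1-\tau w_1}$ and $\tilde\omega^2=-\omega^2\frac{\overline{\tau}\,\overline{w}_1-1}{1-\tau w_1}$; alternatively one could evaluate $F_{\tau,\omega}(w)$ directly for general $w$ and check it fills $m_{w_1}\bigl(\overline{\triangle}(0,\cdot)\bigr)$. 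Either way, an explicit verification is needed at this point of your argument as well.
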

In Theorem~\ref{theorem-degenerate} not only do we have the effective description of the set $\mathcal B(w)$ (and thus $\mathcal B$) but also we find the class of extremal mappings which deliver all the possible values in the degenerate case.

In any case it will follows from the above theorem that the set $\mathcal A$ is connected (see Lemma ~\ref{lem: connect}). This fact will be crucial in the proof of the next theorem.

\subsection{Description of the non-degenerate case}\label{subsection-non-degenerate}

Let $\uU_n(\CC)$ denote the $n^2$ dimensional Lie group of unitary matrices. 

Below we present a construction that allows us to derive Theorem~\ref{theorem-main} in the non-degenerate case. As already mentioned, it is sufficient to express it for $n=2$.

Let us consider a mapping $\Phi$ defined on the set 
\begin{equation}\nonumber
\Omega:=\{(x,y,a,U,c)\in\DD_* \times \DD_* \times (0,1)\times \uU_2(\CC) \times (-1,1):x\neq y\}
\end{equation}
by the formula
\begin{equation}\label{def: Phi}
\Phi(x,y,a,U,c):=(\phi_{a,U,c}(x), \phi_{a,U,c}(y), [x m_\gamma(x): y m_\gamma (y)]),
\end{equation}
where $\gamma=\frac{2c}{1+|c|^2}$, $\phi_{a,U,c}(\la):=\chi_w (U (a m_c(\la), (1-a^2)^{1/2} m_c^2(\la)))$, $\lambda\in\DD$. Recall that 
$m_c$ is the idempotent M\"obius map switching $0$ and $c$, and $\chi_w$, where $w=U((1-a^2)^{1/2} c, a c^2)$, is an idempotent automorphism of the Euclidean unit ball switching $0$ and $w$. 

Let us formulate a result which is contained in the proof of Theorem~5.8 in \cite{War 2015}.

\begin{proposition} (see \cite{War 2015})\label{proposition-left-inverse}
Let $a,U,c,\gamma$ be as above. Then there is a holomorphic mapping $F:\BB_2\to\DD$  such that
\begin{equation}\nonumber
F(\phi_{a,U,c}(\lambda))=\lambda m_{\gamma}(\lambda),\;\lambda\in\DD.
\end{equation}
In particular, $\phi_{a,U,c}$ is a $3$-complex geodesic (that is not a $2$-extremal) and $F$ is its left inverse.
\end{proposition}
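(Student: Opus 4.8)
The plan is to produce one $F\in\OO(\BB_2,\DD)$ with $F\circ\phi_{a,U,c}(\lambda)=\lambda m_\gamma(\lambda)$ for every $\lambda\in\DD$; the remaining conclusions then follow at once. First I would peel off the automorphisms. Put $h(\mu):=(a\mu,(1-a^2)^{1/2}\mu^2)$; since $\|h(\mu)\|^2=|\mu|^2\bigl(a^2+(1-a^2)|\mu|^2\bigr)<1$ we have $h\in\OO(\DD,\BB_2)$, and straight from the definitions $\phi_{a,U,c}=\chi_w\circ U\circ h\circ m_c$. Next I would record the elementary identity
\begin{equation}\nonumber
m_{c^2}\!\bigl(m_c(\lambda)^2\bigr)=\lambda\,m_\gamma(\lambda),\qquad\gamma=\frac{2c}{1+|c|^2},
\end{equation}
verified by factoring $c^2-m_c(\lambda)^2$ and $1-\bar c^2m_c(\lambda)^2$ and cancelling the common factor $(1-|c|^2)(1+|c|^2)$. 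Given these two facts, it is enough to construct $G\in\OO(\BB_2,\DD)$ with $G(h(\mu))=\mu^2$ for all $\mu\in\DD$: then $F:=m_{c^2}\circ G\circ U^{-1}\circ\chi_w$ maps $\BB_2$ holomorphically into $\DD$, and, using $\chi_w^{-1}=\chi_w$ and $U^{-1}=U^{*}$, one gets $F\circ\phi_{a,U,c}(\lambda)=m_{c^2}\!\bigl(G(h(m_c(\lambda)))\bigr)=m_{c^2}\!\bigl(m_c(\lambda)^2\bigr)=\lambda m_\gamma(\lambda)$.

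With such an $F$ in hand the last assertions are immediate. As $c\in(-1,1)$ we have $\gamma\in\DD$, so $\lambda\mapsto\lambda m_\gamma(\lambda)$ is a non-constant Blaschke product of degree $2$; thus $\phi_{a,U,c}$ is, by definition, a complex $3$-geodesic with left inverse $F$. It is not a $2$-extremal: the range of a $2$-extremal is contained in an affine complex line — indeed, by the description of the set $\mathcal C$ recalled above, any three of its points are collinear — while already $h(\DD)$, the ball part of the honest parabola $\{(1-a^2)^{1/2}z_1^2=a^2z_2\}$ with $a\in(0,1)$, contains non-collinear triples; since automorphisms of $\BB_2$ preserve $2$-extremals, hence preserve the property of having a non-collinear triple in the range, the image $\phi_{a,U,c}(\DD)=\chi_w(U(h(\DD)))$ is not on an affine line either.

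The substantive step, and the one I expect to be the main obstacle, is the construction of $G$. Since the variety $h(\DD)=\{(z_1,z_2)\in\BB_2:(1-a^2)^{1/2}z_1^2=a^2z_2\}$ is one complex-dimensional (with $\mu=z_1/a$ along it), it is natural to seek $G=N/D$ with $N,D$ polynomials of degree at most $2$, $D(0)=1$, $N(0)=0$, subject to the requirement that $N(h(\mu))=\mu^2 D(h(\mu))$ hold identically in $\mu$ — an under-determined linear system on the coefficients of $N$ and $D$. The condition $G\in\OO(\BB_2,\DD)$ then reduces, by the maximum principle, to the inequality $|N|^2\le|D|^2$ on the sphere $\{|z_1|^2+|z_2|^2=1\}$; because $z_1$ enters $N$ and $D$ only through $z_1^2$, the maximization over $\arg(z_1^2)$ can be performed explicitly, leaving a single-variable inequality in $|z_2|$. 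The free coefficients must be chosen so that this inequality holds, and the right choice is pinned down by requiring that the resulting one-variable inequality degenerate to equality exactly at the values corresponding to the boundary arc $\overline{h(\DD)}\cap\partial\BB_2=\{(ae^{i\theta},(1-a^2)^{1/2}e^{2i\theta}):\theta\in\RR\}$, on which necessarily $|G|\equiv1$. Naive or overly symmetric choices of $N$ and $D$ fail this test, so the final verification needs genuine care; it is exactly the computation carried out in the proof of Theorem~5.8 of \cite{War 2015}, which one may alternatively invoke directly.
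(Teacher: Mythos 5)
Your reduction is, in substance, the route the paper itself relies on: the proposition is stated with a citation to \cite{War 2015}, and in the proof of Lemma~\ref{lem:open} the paper uses precisely your composition $m_{c^2}\circ G\circ U^{-1}\circ\chi_w$ (there with $G=F_\varepsilon$) together with the identity $m_{c^2}\bigl(m_c(\lambda)^2\bigr)=\lambda m_\gamma(\lambda)$, which you state and which is correct. Your automorphism bookkeeping ($\chi_w^{-1}=\chi_w$, $U^{-1}=U^{*}$) and your argument that $\phi_{a,U,c}$ is not a $2$-extremal (ranges of $2$-extremals in $\BB_2$ lie on affine complex lines, while the image contains non-collinear triples for $a\in(0,1)$) are both fine. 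The one point to correct is your assessment of the ``substantive step'': the map $G\in\OO(\BB_2,\DD)$ with $G\bigl(a\mu,(1-a^2)^{1/2}\mu^2\bigr)=\mu^2$ does not require a rational ansatz with non-constant denominator, and it is not true that naive choices fail. The paper records the explicit formula (going back to \cite{Kos-Zwo 2015}), namely, writing $b=\sqrt{1-a^2}$,
\begin{equation}\nonumber
G(z)=\frac{z_1^2+2bz_2}{2-a^2},
\end{equation}
and the inclusion $G(\BB_2)\subset\DD$ is a two-line estimate: on $\BB_2$ one has $|z_1^2+2bz_2|\le|z_1|^2+2b|z_2|<1-|z_2|^2+2b|z_2|=1+b^2-\bigl(|z_2|-b\bigr)^2\le 2-a^2$; this is also the case $\varepsilon=0$ of Lemma~\ref{lem:nonunique}, which the paper proves by an elementary computation. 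With that formula inserted, your argument becomes a complete, self-contained proof, and the final appeal to Theorem~5.8 of \cite{War 2015} is unnecessary (though invoking it is of course consistent with how the paper presents the proposition).
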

  
The above proposition implies that the image of $\Phi$ lies in the set $\mathcal A\setminus\mathcal C$ (see Lemma~\ref{lem: PhisubsA} for details). Our aim is to show more. Namely, we have

\begin{theorem}\label{theorem-non-degenerate} With the notation as above,
$\Phi(\Omega)=\mathcal A\setminus \mathcal C$.
\end{theorem}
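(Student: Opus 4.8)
The plan is to prove the two inclusions in $\Phi(\Omega)=\mathcal A\setminus\mathcal C$ separately. One of them, $\Phi(\Omega)\subseteq\mathcal A\setminus\mathcal C$, is already available (Lemma~\ref{lem: PhisubsA}), so the whole task is $\mathcal A\setminus\mathcal C\subseteq\Phi(\Omega)$, which I would deduce by a topological argument of the type used in \cite{Kos 2015}. Since $\mathcal A$ is open and $\mathcal C$ is closed, $\mathcal A\setminus\mathcal C$ is open; by Lemma~\ref{lem: connect} it is connected; and $\Phi(\Omega)$ is plainly non-empty. Hence it suffices to show that $\Phi(\Omega)$ is both open and relatively closed in $\mathcal A\setminus\mathcal C$, for a non-empty clopen subset of a connected space is the whole space.

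For openness I would first record the dimension count: $\Omega$ is a real-analytic manifold of real dimension $2+2+1+4+1=10$ (using $\dim_{\RR}\uU_2(\CC)=4$), while $\mathcal D_2\times\mathbb P^1$ is an open subset of $\CC^2\times\CC^2\times\mathbb P^1$ and so is also $10$-dimensional, and $\Phi$ is real-analytic. It is therefore enough to check that $d\Phi$ is a linear isomorphism at every point of $\Omega$: then $\Phi$ is a local diffeomorphism, in particular an open map, and as $\Phi(\Omega)\subseteq\mathcal A\setminus\mathcal C$ this gives openness relative to $\mathcal A\setminus\mathcal C$. The differential would be computed using the equivariance $\Phi(x,y,a,VU,c)=(V\phi_{a,U,c}(x),V\phi_{a,U,c}(y),[xm_\gamma(x):ym_\gamma(y)])$ for $V\in\uU_2(\CC)$, which follows from $\chi_{Vw}\circ V=V\circ\chi_w$; as the $\uU_2(\CC)$-action on $\Omega$ is free, this reduces the verification to nonsingularity of the map induced on the $6$-dimensional quotients, i.e.\ of a Jacobian in the real parameters $x,y,a,c$ alone, which one then evaluates explicitly from the formulas for $\chi_w$, $m_c$ and $\gamma=2c/(1+|c|^2)$. (Should checking nonsingularity everywhere prove cumbersome, it is in fact enough to exhibit a single point of $\Omega$ at which $d\Phi$ has full rank $10$; combined with properness, Sard's theorem, and the connectedness of $\mathcal A\setminus\mathcal C$, this already forces $\Phi(\Omega)=\mathcal A\setminus\mathcal C$.)

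For relative closedness I would show that $\Phi$ is proper onto $\mathcal A\setminus\mathcal C$. Take $\Phi(p_k)\to q\in\mathcal A\setminus\mathcal C$ with $p_k=(x_k,y_k,a_k,U_k,c_k)\in\Omega$. The closures of the parameter ranges, $\overline\DD$, $[0,1]$, $\uU_2(\CC)$, $[-1,1]$, are compact, so after passing to a subsequence $p_k\to(x,y,a,U,c)$ with $x,y\in\overline\DD$, $a\in[0,1]$, $U\in\uU_2(\CC)$, $c\in[-1,1]$. It then remains to exclude the boundary configurations $a\in\{0,1\}$, $c\in\{-1,1\}$, $x=y$, and $x$ or $y$ lying in $\{0\}\cup\partial\DD$; once these are ruled out one has $(x,y,a,U,c)\in\Omega$ and $q=\Phi(x,y,a,U,c)$ by continuity. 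In each excluded configuration the limiting interpolation data degenerates: the geodesics $\phi_{a_k,U_k,c_k}$ either collapse towards the boundary (for example if $c_k\to\pm1$, then $m_{c_k}$ tends to a unimodular constant and $\gamma_k\to\pm1$, so $z_k=\phi_{a_k,U_k,c_k}(x_k)$ and $w_k$ run to $\partial\BB_2$ or coalesce), or straighten onto an affine complex line (for example if $a_k\to1$ the second coordinate of $U_k(a_km_{c_k}(\cdot),(1-a_k^2)^{1/2}m_{c_k}^2(\cdot))$ vanishes), or one interpolation point ceases to be an interior node. Accordingly $q$ leaves $\mathcal D_2\times\mathbb P^1$, or lands in the co-linear set $\mathcal C$, or makes a two-point subproblem extremal so that $q\in\mathcal B$ — none of which is compatible with $q\in\mathcal A\setminus\mathcal C$. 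Hence the $p_k$ stay in a compact subset of $\Omega$, $q\in\Phi(\Omega)$, and $\Phi(\Omega)$ is relatively closed in $\mathcal A\setminus\mathcal C$. Together with openness and connectedness this gives $\Phi(\Omega)=\mathcal A\setminus\mathcal C$.

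The step I expect to be the main obstacle is openness, that is, the verification that $d\Phi$ is nonsingular — equivalently, that a small perturbation of the Pick data $(z,w,[\sigma:\tau])$ can be matched by a suitable perturbation of the parameters $x,y,a,U,c$. The bookkeeping in the properness step — confirming that each boundary degeneration really does push the data out of $\mathcal A\setminus\mathcal C$ — is the second most delicate point.
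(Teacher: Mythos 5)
Your overall skeleton is the same as the paper's: $\Phi(\Omega)\subseteq\mathcal A\setminus\mathcal C$ by Lemma~\ref{lem: PhisubsA}, connectedness by Lemma~\ref{lem: connect}, and then ``non-empty clopen in a connected set''. The genuine gap is that neither of the two load-bearing steps is actually proved. For openness you replace the paper's argument by ``check that $d\Phi$ is nonsingular everywhere'', and you leave exactly that verification undone; but this is the crux, and it is moreover a strictly stronger statement than what is needed or what the paper establishes. The paper proves instead that $\Phi$ is two-to-one (with the symmetry $\Phi(x,y,a,U,c)=\Phi(-x,-y,a,U,-c)$), hence locally injective, via a function-theoretic uniqueness argument: it applies the one-parameter family of left inverses $F_\varepsilon$ of Lemma~\ref{lem:nonunique} to the hypothetical second preimage and uses Claim~\ref{claim:identity} to force $c_2=\pm c_1$, $a_1=a_2$, $U_1=U_2$, etc.; openness then follows from equality of dimensions by invariance of domain. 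Note that local injectivity of a real-analytic map does not imply a nonvanishing Jacobian (think of $x\mapsto x^3$), so your route may require proving more than is true to be provable this way, and in any case the explicit Jacobian computation you defer is where all the work lies. Your parenthetical fallback is incorrect as stated: properness, one regular point, Sard and connectedness do not force surjectivity --- $x\mapsto x^2$ on $\RR$ is proper and real-analytic with plenty of regular points, yet its image is a proper closed subset of the connected target. A degree argument cannot rescue this either, since $\Phi$ is two-to-one, so its mod $2$ degree is zero.

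The closedness part is in the right spirit (it is the paper's Lemma~\ref{lemma:closed}), but the decisive boundary case is mischaracterized rather than handled. When $c_k\to\pm1$ it is not true in general that the nodes $z_k,w_k$ ``run to $\partial\BB_2$ or coalesce''; in the delicate case $x_k,y_k\to 1$ the nodes converge to distinct interior points $z_0\neq w_0$, and what one must prove is that the limiting two-point subproblem $z_0\mapsto\sigma_0$, $w_0\mapsto\tau_0$ is extremal, so that the limit lies in $\mathcal B$, contradicting $q\in\mathcal A$. The paper does this through the inequality $c_{\DD}^*(\sigma_0,\tau_0)=\lim c_{\DD}^*(m_{\gamma_n}(x_n),m_{\gamma_n}(y_n))=\lim c_{\DD}^*(x_n,y_n)\geq \lim c_{\BB_2}^*(z_n,w_n)=c_{\BB_2}^*(z_0,w_0)$, combined with the reverse trivial inequality; the remaining cases ($x_0\in\DD$, or $x_0,y_0\in\partial\DD$ not both $1$) are then dispatched separately, and one also needs the preliminary observations that extremality passes to the limit and that $a_0\in(0,1)$. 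Your sentence ``accordingly $q$ leaves $\mathcal D_2\times\mathbb P^1$, or lands in $\mathcal C$, or makes a two-point subproblem extremal'' asserts precisely the conclusion that has to be argued. So as it stands the proposal is a plausible outline of the paper's strategy with both substantive ingredients (the injectivity/openness mechanism and the boundary case analysis for properness) missing.
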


In other words, there is a correspondence (in fact $2:1$) between the three extremals and the non-degenerate $3$-point Pick problem in the ball. Moreover, as we shall see the solution of the non-degenerate extremal $3$-point Pick problem in the ball may also be taken from a relatively simple class of functions.

Analogous result holds in the polydisc (see \cite{Kos 2015}). However, there are some differences between the case of the ball and that of the polydisc. First note that unlike in the case of the polydisc the case of the unit ball may be easily reduced (due to the form of holomorphic automorphisms) to the two dimensional case. For instance the proof in the polydisc was different in dimension, $2$, $3$ and at least $4$. Secondly the situation in the ball differs from that in the polydisc since the degenerate case in the ball is a big one.

\section{Degenerate case - proof of Theorem~\ref{theorem-degenerate}}
At first we clarify the situation when the degeneracy is `strong'. Roughly speaking the result below says that if two of three $2$-point subproblems corresponding to $(z,w,\xi)$ are extremal, then $(z,w,\xi)$ lies in $\mathcal C$. More precisely:

\begin{lemma}\label{lem: colinear}
Assume that  two of three $2$-point subproblems of the $3$-problem Pick problem 
\begin{equation}\nonumber
\BB_n\to \DD,\quad z_i\mapsto \sigma_i,\; i=1,2,3,
\end{equation}
are extremal.
Then $z_1,z_2, z_3\in \BB_n$ lie on a common complex affine line.
\end{lemma}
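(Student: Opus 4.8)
The plan is to use the fact that extremality of a $2$-point Pick problem is an equality statement for the Carathéodory distance, which (in the ball) forces the interpolating function and the competitor disc to be very rigid. After applying an automorphism of $\BB_n$ we may assume that one of the two extremal pairs is $\{z_1,z_2\}$ with $z_1=0$, $\sigma_1=0$; extremality of $0\mapsto 0$, $z_2\mapsto\sigma_2$ then means $|\sigma_2|=\|z_2\|$ (using the formula for $c_{\BB_n}^*$), and after a unitary rotation and rescaling we may assume $z_2=(z_1',0')$ with $\sigma_2=z_1'$ for some $z_1'\in\DD_*$. Applying the Schwarz lemma to the slice $\DD\ni\zeta\mapsto F(\zeta,0')$ forces $F(\zeta,0')=\zeta$ for all $\zeta\in\DD$, so $F$ restricts to the identity on $\DD\times\{0'\}$.

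Next I would bring in the second extremal pair. It is one of $\{z_1,z_3\}$ or $\{z_2,z_3\}$; by the symmetry just exploited (both $z_1$ and $z_2$ now lie on the line $L:=\DD\times\{0'\}$ and $F|_L=\mathrm{id}$) it suffices to treat $\{z_1,z_3\}=\{0,z_3\}$. Extremality of $0\mapsto 0$, $z_3\mapsto\sigma_3$ gives $|\sigma_3|=\|z_3\|$, i.e.\ $c_{\BB_n}^*(0,z_3)=c_\DD^*(0,\sigma_3)$. The key step is then: a holomorphic $F:\BB_n\to\DD$ that is distance-preserving for the pair $(0,z_3)$ must be a complex geodesic's left inverse in a rigid way — concretely, the Schwarz–Pick equality forces $F$ to agree, along the complex line $\CC z_3\cap\BB_n$, with the linear functional $\langle\cdot,z_3/\|z_3\|\rangle$ (again by the one-variable Schwarz lemma applied to the slice $\zeta\mapsto F(\zeta z_3/\|z_3\|)$). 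So $F$ is the identity-type map on two complex lines through the origin: $L=\DD\times\{0'\}$ and $L':=\CC z_3\cap\BB_n$.

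Now I would extract the colinearity. Write $z_3=(z_3',z_3'')$ with respect to the splitting $\CC\times\CC^{n-1}$. If $z_3''=0$ then $z_3\in L$ and all three points $0,z_2,z_3$ lie on $L$, done. If $z_3''\neq 0$, consider the two affine (in fact linear) functionals $F$ and $\pi_1(\zeta)=\zeta_1$ on the complex line $L'$: both send $0$ to $0$ and, one checks, both are norm-decreasing on $\BB_n$, while $F$ equals $\langle\cdot,z_3/\|z_3\|\rangle$ on $L'$. The point is that on $L'$ we have two distinct $\DD$-valued holomorphic functions that each attain the Schwarz bound; pulling back the extremal disc through $z_3$ and through the slice of $L$, and using that a weak $2$-extremal disc in a convex domain is essentially unique (Lempert, Section~\ref{sec: N=2}), I would argue the geodesic through $0$ and $z_3$ must coincide with the geodesic through $0$ and $z_2$, i.e.\ with $L$. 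That forces $z_3\in L$, contradicting $z_3''\neq 0$, so in fact $z_1,z_2,z_3$ are colinear.

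The main obstacle I anticipate is the last paragraph: passing from ``$F$ restricts to the canonical linear functional on each of two complex lines $L,L'$ through $0$'' to ``$L=L'$.'' The clean way is to invoke uniqueness of complex geodesics (equivalently, of left inverses up to the relevant ambiguity) for the convex domain $\BB_n$ from the $N=2$ / Lempert discussion, rather than computing: if $0,z_2$ and $0,z_3$ both sit on extremal discs left-inverted by the same $F$, these discs are geodesics for $c_{\BB_n}$, they share the point $0$ and the ``direction'' pinned down by $F$, hence are equal, hence $z_2,z_3$ lie on one complex line through $0$. Getting the bookkeeping of automorphisms right so that both extremal pairs are simultaneously normalized (one pair to $\{0,(z_1',0')\}$ with $F=\mathrm{id}$ there) without losing the hypothesis on the other pair is the place where care is needed; everything else is Schwarz-lemma rigidity in one variable applied slice by slice.
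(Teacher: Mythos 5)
Your normalization and the slice-wise Schwarz arguments are exactly the paper's starting point: after automorphisms you get $F(\zeta,0')=\zeta$ on $L=\DD\times\{0'\}$ and, from extremality of the second pair, $F(\la v)=e^{i\theta}\la$ along the line $L'=\CC v\cap\BB_n$ through the third point (with $\|v\|=1$, $|v_1|<1$ if the points were not colinear). The problem is the step you yourself flag as the obstacle, and your proposed way around it does not work. You want to conclude $L=L'$ by invoking ``uniqueness of weak $2$-extremal discs / complex geodesics (equivalently, of left inverses up to the relevant ambiguity)'' from the Lempert discussion. Uniqueness of geodesics is a statement about a \emph{fixed pair} of points: the pairs $(0,z_2)$ and $(0,z_3)$ are different, so nothing forces the two geodesics to coincide merely because they share the point $0$. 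And uniqueness of left inverses is simply false in the ball --- this is a point the paper itself stresses: every function $F_{\tau,\omega}$ of \eqref{class F11} is a left inverse to the same geodesic $\la\mapsto(\la,0)$, and Theorem~\ref{theorem-degenerate} shows that such normalized left inverses take a whole closed disc of values at a point $w$ off $L$. So the fact that $F$ is a (normalized) left inverse to $L$ does not ``pin down a direction'' by itself; that $|F|$ cannot be norm-preserving at any point off $L$ is precisely the content of the lemma and needs a quantitative argument, not a uniqueness citation. (Also, your remark that $\pi_1$ ``attains the Schwarz bound'' on $L'$ is wrong when $|v_1|<1$.)

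The paper closes exactly this gap by a computation: from $F(\la,0')=\la$ and $F(\la v)=e^{i\theta}\la$, holomorphic contractibility of $c^*$ applied to the pair $\bigl((z_1',0'),\la v\bigr)$ gives $|1-e^{i\theta}\la\overline{z_1'}|^2\le|1-\la v_1\overline{z_1'}|^2$ for all $\la\in\DD$, and letting $\la$ tend to a boundary point with $\re\bigl(e^{i\theta}\la\overline{z_1'}\bigr)=-|z_1'|$ yields $(1+|z_1'|)^2\le(1+|v_1|\,|z_1'|)^2$, impossible since $|v_1|<1$. If you prefer an argument at the origin, you could instead differentiate: $dF_0$ is a linear functional with $|dF_0(X)|\le\|X\|$ (Schwarz on slices), and $F(\la,0')=\la$ forces $dF_0=\langle\cdot,e_1\rangle$ by the equality case of Cauchy--Schwarz, whereas $F(\la v)=e^{i\theta}\la$ would give $|dF_0(v)|=1>|v_1|$. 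Either way, some such equality/limit computation is indispensable; as written, your final paragraph is a genuine gap.
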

\begin{proof} Let $F$ be a function interpolating the above $3$-point Pick problem.
Since automorphisms of the Euclidean ball map complex affine lines into complex affine lines we may assume that $z_3=0$, $z_1=(z_1',0)$ and that the subproblems comprising $z_1,z_3$ and $z_2, z_3$ are extremal. We may also assume that $\sigma_3=0$ and $\sigma_1= z_1$. Suppose the claim does not hold so $z_2=\lambda_0 v$ for some $\lambda_0\in\DD\setminus\{0\}$, where $||v||=1$ and $|v_1|<1$. Then it is clear that $F(\la,0) = \la$, and $F(\lambda v)=e^{i\theta}\la$ for some $\theta\in\RR$ and all $\la\in \DD$. 

Consequently,
\begin{equation}\nonumber
(c_{\BB_n}^*((z_1^{\prime},0),\la v))^2\geq c_{\DD}^*(F(z_1^{\prime},0),F(\la v))^2=(c_{\DD}^*(z_1^{\prime},e^{i\theta}\la))^2,\;\la\in\DD
\end{equation}
which is equivalent to 
\begin{equation}\nonumber
|1-e^{i\theta}\la\overline{z}_1|^2\leq|1-\la v_1\overline{z}_1|^2,\;\la\in\DD.
\end{equation}
Substituting $\la$ in the unit circle such that $\re(e^{i\theta} \la \bar z_1)=-|z_1|$ we get a contradiction.

\end{proof}

Now we proceed to the proof of Theorem~\ref{theorem-degenerate}
\begin{proof}[Proof of Theorem~\ref{theorem-degenerate}] We shall make use of the invariance of the sets $\mathcal B(w)$ under automorphisms so at first we study the case of $w\in\{0\}\times\DD$. To do this define
\begin{equation}\nonumber
u(z_2):=\sup\{|F(0,z_2)|:F\in\mathcal O(\BB_2,\DD), F(z_1,0)=z_1\},\; z_2\in\DD.
\end{equation}

It is simple to see that $u(z_2)=u(|z_2|)$.

We claim that 
\begin{equation}\label{claim}
u(z_2)\leq\frac{|z_2|^2}{2-|z_2|^2},\; z_2\in\DD.
\end{equation}

Fix $1>z_2\geq 0$. Take $F\in\mathcal O(\BB_2,\DD)$ such that $F(z_1,0)=z_1$, $z_1\in\DD$. Without loss of generality assume that $x:=F(0,z_2)\geq 0$. The holomorphic contractibility of the Carath\'eodory distance gives
\begin{equation}\nonumber
(c_{\BB_2}^*((z_1,0),(0,z_2)))^2\geq (c_{\DD}^*(F(z_1,0),F(0,z_2)))^2=c_{\DD}^*(z_1,x)^2
\end{equation}
for any $z_1\in\DD$. The last inequality may be written in the form
\begin{equation}\nonumber
1-(1-|z_1|^2)(1-|z_2|)^2\geq \left|\frac{z_1-x}{1-xz_1}\right|^2,\;z_1\in\DD.
\end{equation}

Consider only $z_1\in(-1,1)$. Then the last inequality is equivalent to
\begin{equation}\nonumber
x^2(1-z_2^2)(1+z_1^2-z_1^2z_2^2)-2xz_1(1-z_1^2)(1-z_2^2)-z_2^2(1-z_1^2)\leq 0,\;z_1\in(-1,1).
\end{equation}


Consequently, 
\begin{equation}\nonumber
x^2(1+z_1^2-z_1^2z_2^2)-2xz_1(1-z_2^2)-z_2^2\leq 0,\; z_1\in(-1,1).
\end{equation}

Passing with $z_1\to-1$ we get that
\begin{equation}\nonumber
x^2(2-z_2^2)+2x(1-z_2^2)-z_2^2\leq 0.
\end{equation}
The last is equivalent to the inequalities $-1\leq x\leq\frac{z_2^2}{2-z_2^2}$, which finishes the claim.

The inequality (\ref{claim}) implies that for any $w_2\in\DD$ we get the inclusion
$\mathcal B(0,w_2)\subset\{\sigma:|\sigma|\leq \frac{|w_2|^2}{2-|w_2|^2}\}$. To see the opposite inclusion fix $w_2\in\DD$. 
Then manipulating with $|\tau|=1$, $|\omega|\leq 1$ we easily find that for any 
$|\sigma|\leq \frac{|w_2|^2}{2-|w_2|^2}$ there is a function $F:=F_{\tau,\omega}$ such that $F(0,w_2)=\sigma$ which gives the desired description of $\mathcal B(0,w_2)$.

\vskip1cm

Recall that $\chi_{(w_1,0)}(z)=\left(m_{w_1}(z_1),\frac{\sqrt{1-|w_1|^2}}{1-\overline{w}_1z_1}z_2\right)$, $z\in\BB_2$.

Note that for any function $F\in\mathcal O(\BB_2,\DD)$ such that $F(z_1,0)=z_1$, $z_1\in\DD$ the function $G:=m_{w_1}\circ F\circ \chi_{(w_1,0)}\in\OO(\BB_2,\DD)$ and it satisfies the equality $G(z_1,0)=z_1$, $z_1\in\DD$, too. 

Since $\chi_{(w_1,0)}(w)=\left(0,\frac{w_2}{\sqrt{1-|w_1|^2}}\right)$ this implies that 
$$
\mathcal B (w)=m_{w_1}\left(\mathcal B\left(0,\frac{w_2}{\sqrt{1-|w_1|^2}}\right)\right),
$$
which gives the desired description. In particular, for any $w\in\BB_2$ the set $\mathcal B(w)$ is the closed Euclidean disc lying in $\DD$ - more precisely it is a closed disc with respect to the Poincar\'e distance (the function $c_{\DD}^*$) centered at $w_1$.

\bigskip

The above procedure also allows us to construct functions that give all the values $F(w)$ from the set $\mathcal A(w)$. It follows from the above reasoning (and the result for $w\in\{0\}\times\DD$) that these extremal values will be attained by functions from the class 
$$
\{m_{w_1}\circ F_{\tau,\omega}\circ \Psi_{(w_1,0)}: |\tau|=1, |\omega|\leq 1\}.
$$

But this class (formally depending on $w$) is the same for all $w$ and consequently it coincides with that for $(0,w_2)$ (some or any). To see this, it is sufficient to make elementary calculations to get that
\begin{equation}\nonumber
m_{w_1}\circ F_{\tau,\omega}\circ \Psi_{(w_1,0)}(z)=\frac{2\left(1-z_1\frac{\overline{w_1}-\tau}{1-\tau w_1}\right)z_1
+\omega^2\frac{\overline{\tau}-w_1}{1-\tau w_1} z_2^2}{2\left(1-z_1 \frac{\overline{w_1}-\tau}{1-\tau w_1}\right)+\omega^2\frac{\overline{\tau}\overline{w}_1-1}{1-\tau w_1} z_2^2}.
\end{equation}
It is easy to observe that the last form is the function $F_{\tilde\tau,\tilde\omega}$, where $\tilde\tau=\frac{\overline{w}_1-\tau}{1-\tau w_1}$, $\tilde\omega^2=-\omega^2\frac{\overline{\tau}\overline{w}_1-1}{1-\tau w_1}$ .
\end{proof}

One may conclude more from the proof of Theorem~\ref{theorem-degenerate}. Note that in contrast with the situation in the polydisc in the case of the ball the set of degenerate extremal $3$-point Pick problem is a `big' one in the sense that the set $\mathcal B$ has non-empty interior. Moreover, as formulated in Theorem~\ref{theorem-degenerate} the class of functions which gives all the possible values in this case is, up to automorphisms of the ball, recovered from a nice class of functions:
\begin{equation}\label{class FD}
\mathcal F_D:=\{F_{\tau,\omega}:|\tau|=1,|\omega|\leq 1\},
 \end{equation} 
where $F_{\tau,\omega}$ is given by \eqref{class F11}.
It is interesting to note that in order to parametrize the boundary $\partial\mathcal B$ the corresponding class of functions may be chosen from the above one with the additional assumption that $|\omega|=1$.

\section{Non-degenerate case - proof of Theorem~\ref{theorem-non-degenerate}}

\subsection{3-complex geodesics}

It was proven in \cite{War 2015} (Theorem 5.8) that any $3$-extremal in the unit ball is a complex $3$-geodesic. We recall the reasoning that led to this result.

Let $f:\DD\to \BB_2$ be a $3$-extremal in the unit ball which is not a $2$-extremal. Recall that then $f$ is, up to a composition with an automorphism of $\BB_n$, of the form $$\lambda \mapsto (a \lambda, \sqrt{1-a^2} \lambda m_\alpha (\lambda)),$$ where $a\in [0,1)$ and $\alpha \in \DD$ (see \cite{Kos-Zwo 2015}, Section 3).

It was also proved in \cite{Kos-Zwo 2015} that any such $f$ with $\alpha=0$ admits a left inverse of the form 
$$F(z) = \frac{z_1^2}{2- a^2} + \frac{2 \sqrt{1-a^2} z_2 }{2-a^2}$$
thus showing that such an $f$ is a complex $3$-geodesic.

Later, Warszawski in \cite{War 2015} (Theorem 5.8) showed additionally that any $3$-extremal $f$ that is not a $2$-extremal is actually equivalent with a $3$-complex geodesic of the form $$\lambda \mapsto (a m_c(\lambda), \sqrt{1-a^2} m_c^2(\lambda)),\; a\in[0,1).$$ A more detailed result on $3$-geodesity of $3$-extremals (not being $2$-extremals) proven in \cite{War 2015} is presented in Proposition~\ref{proposition-left-inverse}.

Remark that the above facts show that one may choose a relatively small class (modulo automorphisms of the unit ball) of left inverses that would be good for  all $3$-extremals. 
This class of functions equals (up to automorphisms)
\begin{equation}\label{class FND}
\mathcal F_{ND}:=\left\{\frac{z_1^2}{2-a^2}+\frac{2\sqrt{1-a^2}z_2}{2-a^2}:a\in [0,1)\right\}.
\end{equation}
These observations will be crucial in our subsequent considerations.

\subsection{Left inverses}
As we have already mentioned the function $F$ given by the formula 
$F(z) = \alpha z_1^2 + \beta z_2$ with $\alpha=\frac{1}{2-a^2}$, $\beta=\frac{2\sqrt{1-a^2}}{2-a^2}$ is a left inverse to $\lambda \mapsto (a\lambda, b \lambda^2)$, where $a,b\geq 0$, $a^2 + b^2 =1$. Elementary calculations also show that $|F|$ attains its maximum on the topological boundary of $\partial \BB_2$ along the algebraic set $\{z\in\partial\BB_2: b z_1 = a^2 z_2\}$. 

This brings us to the following general situation: let $D$ be a smooth domain in $\CC^n$ and $F\in \mathcal O(\overline D)$ be such that $F(D)\subset \DD$. Assume that there exists $g\in \mathcal O(\overline D)$ such that $\{z\in \partial D: |F(z)|=1\} = \{z\in \partial D:\ g(z)=0\}\cap \partial D$. Some methods coming from analytic geometry will allow us to show that there is $\alpha\in \NN$ such that $$F_K(z):=\frac{F(z)}{\sqrt {1 - K g^\alpha (z)}}\in \DD,\quad z\in D,$$ whenever $K>0$ is small enough.

Actually, let $Z:=\{|F|=1\}\cap \partial D$. \L ojasiewicz inequality (see e. g. \cite{Loj 1991})  gives 
\begin{equation}\label{eqgen} C\dist(z, Z)^\alpha\leq 1-|F(z)|^2
\end{equation}
for $z\in \partial D$.
On the other hand, since $Z$ is the zero set of $g$ on $\partial D$, for any $x\in Z$ we get that $|g(z)|=|g(z)-g(x)|\leq C' |z-x|$, so the following trivial inequality holds:
\begin{equation}\label{eqgen1} |g(z)|\leq C' \dist(z, Z).
\end{equation}
Now \eqref{eqgen} and \eqref{eqgen1} together give $$K|g(z)|^\alpha + |F(z)|^2 <1,\quad z\in \partial D,$$ where $K$ is small enough. The maximum principle finishes the proof of our claim.

Below we shall provide the Reader with an elementary proof of the above fact for $D$ being the unit ball 
and $F$ given as above by the formula $F(z) = \alpha z_1^2 + \beta_2 z_2$ and the function $g$ given by the formula 
$g(z) = b z_1 - a^2 z_2$. Beyond being elementary it has other advantages as it gives explicit formulas for $\alpha$ and $K$ satisfying the claim.

Let us fix the situation that we study. Let 
\begin{equation}\nonumber
a\in(0,1),\;b:=\sqrt{1-a^2},\;
F(z):=\frac{1}{2-a^2}(z_1^2+2bz_2).
\end{equation}
As already mentioned $F$ maps $\BB_2$ into $\DD$ and 
\begin{equation}\label{eq:left-inverse}
F(a\la,b\la^2)=\lambda^2,\;\lambda\in\DD.
\end{equation}

\begin{lemma}\label{lem:nonunique} With the notation as above for any $0\leq\varepsilon<1$ the function
\begin{equation}\nonumber
F_{\varepsilon}(z):=
 \frac{F(z)}{\sqrt{1 -\frac{\varepsilon^2}{(2-a^2)^2}(bz_1^2 - a^2 z_2)^2}}
 \end{equation}
  maps $\BB^2$ into $\DD$.
\end{lemma}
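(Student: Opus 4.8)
The plan is to show that $F_\varepsilon$ extends holomorphically to a neighbourhood of $\overline{\BB_2}$, to use the maximum principle to reduce the claim to an estimate of $|F_\varepsilon|$ on the sphere $\partial\BB_2$, and finally to reduce that estimate to an elementary polynomial inequality in $|z_1|$ and $|z_2|$.

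Write $g(z):=bz_1^2-a^2z_2$, so that the denominator of $F_\varepsilon$ is $\sqrt{1-\frac{\varepsilon^2}{(2-a^2)^2}\,g(z)^2}$. The first step is the bound $|g(z)|\le 2-a^2$ on $\overline{\BB_2}$. Since $g$ is a polynomial it is enough, by the maximum principle, to check this on $\partial\BB_2$, where $|g(z)|\le b|z_1|^2+a^2|z_2|$; inserting $|z_1|^2=1-|z_2|^2$ this reduces to verifying that $t\mapsto b(1-t^2)+a^2t$ stays $\le 2-a^2$ for $t\in[0,1]$, a routine one–variable estimate (this bound also drops out of the inequality stated below). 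As $\varepsilon<1$, it follows that $\left|\frac{\varepsilon^2}{(2-a^2)^2}g(z)^2\right|\le\varepsilon^2<1$ on $\overline{\BB_2}$, so $1-\frac{\varepsilon^2}{(2-a^2)^2}g(z)^2$ takes values in the open right half–plane. Its principal square root is therefore holomorphic and nowhere zero on a neighbourhood of $\overline{\BB_2}$, so $F_\varepsilon$ is holomorphic there, with $F_\varepsilon(0)=0$, and it is clearly non-constant.

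By the maximum principle $\sup_{\BB_2}|F_\varepsilon|=\max_{\partial\BB_2}|F_\varepsilon|$, and since $F_\varepsilon$ is non-constant it then suffices to prove $|F_\varepsilon|\le 1$ on $\partial\BB_2$. For $w\in\CC$ with $\re w\le 1$ one has $|1-w|\ge 1-\re w$; applying this with $w=\frac{\varepsilon^2}{(2-a^2)^2}g(z)^2$ and using $\re(g^2)\le|g|^2$ together with $\varepsilon\le 1$, we get on $\partial\BB_2$
\[
\left|1-\frac{\varepsilon^2}{(2-a^2)^2}g(z)^2\right|\ \ge\ 1-\frac{|g(z)|^2}{(2-a^2)^2}.
\]
Since $|F(z)|^2=\frac{1}{(2-a^2)^2}|z_1^2+2bz_2|^2$, the bound $|F_\varepsilon|\le 1$ on $\partial\BB_2$ follows once we show
\[
|z_1^2+2bz_2|^2+|bz_1^2-a^2z_2|^2\ \le\ (2-a^2)^2,\qquad |z_1|^2+|z_2|^2=1.
\]

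This last inequality is the heart of the argument and is where I expect the only real work. Writing $z_1^2=\rho\,e^{i\vartheta_1}$ with $\rho=|z_1|^2$ and $z_2=r\,e^{i\vartheta_2}$ with $r=|z_2|$, expanding both squares and using the identities $1+b^2=2-a^2$ and $4b^2+a^4=(2-a^2)^2$, the left–hand side equals
\[
(2-a^2)\bigl[\rho^2+(2-a^2)r^2+2b\rho r\cos(\vartheta_1-\vartheta_2)\bigr].
\]
The bracket is maximal when $\cos(\vartheta_1-\vartheta_2)=1$; substituting $\rho=1-r^2$, subtracting $2-a^2$ and simplifying, the resulting polynomial in $r$ factors as $(r-b)^2(r^2-1)$, which is $\le 0$ for $r\in[0,1]$. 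This proves the displayed inequality, hence $|F_\varepsilon|\le 1$ on $\partial\BB_2$, and hence, by the maximum principle, $|F_\varepsilon|<1$ on $\BB_2$. The one non-obvious point is spotting the factorization $(r-b)^2(r^2-1)$; everything else is bookkeeping.
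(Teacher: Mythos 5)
Your proof is correct, but it takes a route that differs from the paper's own argument in several respects, so let me compare. The paper proves the interior inequality $|z_1^2+2bz_2|^2+\varepsilon^2|a^2z_2-bz_1^2|^2<(2-a^2)^2$ directly on $\BB_2$, keeping $\varepsilon$ throughout: after aligning phases and substituting $x=|z_2|$, $|z_1|^2=1-x^2$, it reduces to the quartic estimate $f(x)\le f(b)=(2-a^2)^2$ on $[0,1]$ with $f(x)=(1-x^2+2bx)^2+\varepsilon^2(a^2x-b(1-x^2))^2$, settled by computing $f(0)$, $f(1)$, $f(b)$, $f'(b)=0$ and $f''(b)<0$ -- and it is in the sign of $f''(b)$ that the hypothesis $\varepsilon<1$ enters. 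You instead discard $\varepsilon$ at the outset (using $\varepsilon\le1$), pass to the sphere, prove the sharp $\varepsilon$-free inequality $|z_1^2+2bz_2|^2+|bz_1^2-a^2z_2|^2\le(2-a^2)^2$ on $\partial\BB_2$ via the factorization $(r^2-1)(r-b)^2\le0$ (which I checked; it is correct, and it even identifies the equality locus), and then recover the strict interior bound by the maximum principle; for you $\varepsilon<1$ serves only to keep the square root holomorphic and nonvanishing, a point you make explicit and the paper leaves implicit. In effect your argument is the general \L ojasiewicz/maximum-principle scheme sketched in the paper just before the lemma, specialized with the explicit constants $\alpha=2$, $K=(2-a^2)^{-2}$, whereas the paper's ``elementary'' proof of the lemma deliberately avoids the maximum principle and estimates inside the ball. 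What each buys: your factorization is cleaner and gives the sharp boundary inequality; the paper's version tracks $\varepsilon$ and needs no boundary regularity or maximum-modulus step. One small point you should add: in the final division, if the lower bound $1-|g(z)|^2/(2-a^2)^2$ were zero at some boundary point, the key inequality forces $F(z)=0$ there, so the bound $|F_\varepsilon(z)|\le1$ holds trivially (and the actual denominator is at least $\sqrt{1-\varepsilon^2}>0$ in any case); this is a one-line remark, not a genuine gap.
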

\begin{proof}
To show the above property it is sufficient to show that
\begin{equation}\nonumber
|z_1^2+2bz_2|^2+\varepsilon^2|a^2z_2-bz_1^2|^2<(2-a^2)^2
\end{equation}
on $\BB^2$.

For $\varepsilon>0$ such that $2b\geq \varepsilon^2a^2b$ it is sufficient to show that
\begin{equation}\nonumber
f(x)\leq f(b)=(2-a^2)^2,\;x\in[0,1],
\end{equation}
where $f(x):=(1-x^2+2bx)^2+\varepsilon^2(a^2x-b(1-x^2))^2$.

Since $f$ is a polynomial of degree four with the positive leading coefficient, $f(0)=1+\varepsilon^2b^2$, $f(1)=4b^2+\varepsilon^2(1-b^2)^2$, $f(b)=(1+b^2)^2$, $f^{\prime}(b)=0$, $f^{\prime\prime}(b)=-4(2-a^2)+2\varepsilon^2(2-a^2)^2$ we easily conclude the desired inequality for $\varepsilon$ such that $f^{\prime\prime}(b)<0$ - and this is the case for $0<\varepsilon<1$.
\end{proof}


In the sequel we shall need the following  observation:
\begin{lemma}\label{lem: PhisubsA} Let us keep the notation as in Subsection~\ref{subsection-non-degenerate}.
Let $x,y\in \DD\setminus\{0\}$ be such that $x\neq y$. Then the problem
$$\BB_2\to\DD,\quad 0\mapsto 0,\;\varphi(x)\mapsto x m_\gamma(x),\;\varphi(y) \mapsto  y m_\gamma(y),
$$
where $\varphi = \varphi_{a,U,c}$, is extremal, non-degenerate and omits $\mathcal C$. In other words $\Phi(x,y,a,U,c)\in\mathcal A\setminus\mathcal C$ for any $(x,y,a,U,c)\in\Omega$.
\end{lemma}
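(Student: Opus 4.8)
The plan is to verify the three assertions — extremality, non-degeneracy, and avoidance of $\mathcal C$ — in that order, exploiting Proposition~\ref{proposition-left-inverse} as the engine for the first claim. First I would recall that by Proposition~\ref{proposition-left-inverse} the disc $\varphi=\varphi_{a,U,c}$ is a $3$-complex geodesic with left inverse $F\in\OO(\BB_2,\DD)$ satisfying $F(\varphi(\la))=\la m_\gamma(\la)$ for all $\la\in\DD$; here $b:=\la m_\gamma(\la)$ is a Blaschke product of degree exactly $2$ (it is non-constant since $\gamma=\frac{2c}{1+|c|^2}\in(-1,1)$ and $m_\gamma$ has a single zero, so $b$ has two zeros counted with multiplicity, at $0$ and at $\gamma$, and they are distinct precisely when $c\neq 0$; if $c=0$ then $b(\la)=\la^2$, still degree $2$). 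Since $0=\varphi(0)$, $\varphi(x)$, $\varphi(y)$ are the images under $\varphi$ of three distinct points $0,x,y\in\DD$ (distinctness of the nodes in $\BB_2$ follows from injectivity of $\varphi$, which holds because $\varphi$ is a complex geodesic, or can be read off the explicit formula), the general fact recalled in the subsection "Complex $N$-geodesics" — that for a complex $N$-geodesic $f$ with left inverse $F$ and $b=F\circ f$, the Pick problem $D\to\DD$, $f(\la_j)\mapsto b(\la_j)$ is extremal — applies with $N=3$ and the nodes $0,x,y$. This gives extremality of the stated problem immediately, and identifies $x m_\gamma(x)=b(x)$, $y m_\gamma(y)=b(y)$ as the correct target values, matching the definition of $\Phi$ in \eqref{def: Phi}.

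Next I would establish non-degeneracy, i.e. that no $2$-point subproblem among the three nodes $0,\varphi(x),\varphi(y)$ is extremal. The cleanest route is: if a $2$-point subproblem were extremal, then by the Lempert theorem (Section~\ref{sec: N=2}) there would be a complex geodesic — an automorphism composed with a one-dimensional disc, i.e. a $2$-extremal — through those two nodes, and in fact $F$ restricted appropriately would compose with this geodesic to a Blaschke product of degree $1$. Concretely, extremality of a $2$-subproblem forces the corresponding Carathéodory distances to agree: $c^*_{\BB_2}(\varphi(p),\varphi(q))=c^*_{\DD}(b(p),b(q))$ for the relevant pair $p,q\in\{0,x,y\}$. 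But $\varphi$ is a complex geodesic, so $c^*_{\BB_2}(\varphi(p),\varphi(q))=c^*_{\DD}(p,q)=\tanh$-type distance between $p,q$, whereas $c^*_{\DD}(b(p),b(q))<c^*_{\DD}(p,q)$ strictly whenever the degree-$2$ Blaschke product $b$ is not a Möbius map — which it never is here, since $b$ has two zeros. (For the pairs involving $0$: $b(0)=0$, so one needs $|b(p)|<|p|$ for $p\in\{x,y\}\setminus\{0\}$, which is the Schwarz lemma applied to $b/(\text{one factor})$, strict because the remaining Blaschke factor $m_\gamma$ has modulus $<1$ on $\DD$; for the pair $x,y$ one uses the strict Schwarz–Pick inequality for $b$.) This strict inequality contradicts the assumed equality, so no $2$-subproblem is extremal, i.e. $(\varphi(x),\varphi(y),[x m_\gamma(x):y m_\gamma(y)])$ — or rather its automorphic image normalizing $0\mapsto 0$ — lies in $\mathcal A$.

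Finally, avoidance of $\mathcal C$: I must show $0,\varphi(x),\varphi(y)$ are not collinear in $\CC^2$. Here I would use the explicit form of $\varphi$. After composing with $\chi_w^{-1}$ (which preserves collinearity and maps $0$ to $0$... actually $\chi_w(0)=w$, so one works with $\psi(\la):=U(a m_c(\la),(1-a^2)^{1/2}m_c^2(\la))$, and notes that collinearity of $0,\varphi(x),\varphi(y)$ through the point $\chi_w(0)=w$ is equivalent, via the geometry of $\chi_w$, to a condition on $\psi$; but more simply: $0,\varphi(x),\varphi(y)$ collinear would put all three in the range of a $2$-extremal, and since $\varphi$ is a $3$-complex geodesic that is not a $2$-extremal, its image is not contained in any complex affine line — indeed the image of $\la\mapsto(a m_c(\la),(1-a^2)^{1/2}m_c^2(\la))$ under a unitary $U$ is a genuinely curved (degree-$2$) complex curve whenever $a\in(0,1)$, as the two coordinate functions $a m_c$ and $(1-a^2)^{1/2}m_c^2$ are linearly independent over $\CC$ together with the constants). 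Thus no complex line contains all three points, so $(z,w,\xi)\notin\mathcal C$. Combining, $\Phi(x,y,a,U,c)\in\mathcal A\setminus\mathcal C$.

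I expect the main obstacle to be the non-degeneracy step: one must be careful that the $2$-subproblem involving the two "generic" nodes $\varphi(x)$ and $\varphi(y)$ is genuinely non-extremal, which really does require the strict form of the Schwarz–Pick inequality for the degree-$2$ Blaschke product $b=\la m_\gamma(\la)$ and the fact that $\varphi$ is an honest geodesic (so that the $\BB_2$-side distance is computed exactly, not just bounded). The collinearity step is geometrically transparent but needs the explicit formula for $\varphi_{a,U,c}$ and the observation that $a\in(0,1)$ keeps both coordinate components nontrivial; if one had $a=0$ or $a=1$ the curve would degenerate, but $\Omega$ excludes these.
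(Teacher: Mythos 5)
Your treatment of extremality (via Proposition~\ref{proposition-left-inverse} and the fact that Blaschke products of degree $2$ are $3$-extremals in $\DD$) and of the avoidance of $\mathcal C$ (push the three points by $U^{-1}\circ\chi_w$ onto the curve $\mu\mapsto(a\mu,\sqrt{1-a^2}\,\mu^2)$, which meets no affine line in three points when $a\in(0,1)$) is essentially the paper's argument and is fine. The non-degeneracy step, however, contains a genuine error. You write that ``$\varphi$ is a complex geodesic, so $c^*_{\BB_2}(\varphi(p),\varphi(q))=c^*_{\DD}(p,q)$'' and then contradict this with the strict Schwarz--Pick inequality $c^*_{\DD}(b(p),b(q))<c^*_{\DD}(p,q)$. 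But $\varphi=\varphi_{a,U,c}$ is only a complex $3$-geodesic and, as Proposition~\ref{proposition-left-inverse} states explicitly, it is \emph{not} a $2$-extremal; hence the equality $c^*_{\BB_2}(\varphi(p),\varphi(q))=c^*_{\DD}(p,q)$ fails for every pair $p\neq q$ (its validity for one pair would make $\varphi$ a $2$-extremal). All one has is $c^*_{\DD}(b(p),b(q))\le c^*_{\BB_2}(\varphi(p),\varphi(q))\le c^*_{\DD}(p,q)$, and the assumed extremality of a $2$-point subproblem means precisely that the first inequality is an equality --- which is perfectly consistent with the strict inequality $c^*_{\DD}(b(p),b(q))<c^*_{\DD}(p,q)$. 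So no contradiction is reached: the whole difficulty of the lemma is to rule out that the ball distance between two of the nodes drops down to the Möbius distance of the targets, and your argument assumes this away.

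The paper closes this gap with a different mechanism, for which the one-parameter family of left inverses $F_\varepsilon$ of Lemma~\ref{lem:nonunique} was constructed. Assuming a $2$-point subproblem is extremal, Lempert's theorem yields a genuine complex geodesic $\psi$ of $\BB_2$ through the two nodes, parametrized so that each $F_\varepsilon\circ\psi$ fixes two distinct points of $\DD$ and is therefore the identity (Schwarz--Pick rigidity). Since this holds for all $\varepsilon\in[0,1)$, the $\varepsilon$-dependent term in $F_\varepsilon$ must vanish along $\psi$, i.e.\ the image of $\psi$ lies on a quadric of the form $z_2=\mathrm{const}\cdot z_1^2$ with nonzero coefficient (here $a\in(0,1)$ is used); this contradicts the fact that complex geodesics of the ball lie on complex affine lines, and the residual degenerate case is excluded by the strict Schwarz--Pick inequality for $\la\mapsto\la^2$. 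Some argument of this strength (or another way to produce a contradiction from the assumed equality $c^*_{\BB_2}(\varphi(p),\varphi(q))=c^*_{\DD}(b(p),b(q))$) is indispensable; without it your proof of non-degeneracy does not stand.
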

\begin{proof}
Extremality is clear, as Blaschke products are $3$-extremals in $\DD$.

We shall show that the subproblem 
\begin{equation}\nonumber
\BB_2\to\DD,\; 0\mapsto 0,\; \varphi(x) \mapsto x m_\gamma (x)
\end{equation} 
is not extremal. The proof for two other $2$-point subproblems is based on the same idea. 

Note that extremality of the above $2$-point subproblem is equivalent to the equality (use (\ref{eq:left-inverse} 
and the definition of the $\varphi$.
$$c_{\BB_2}^* ((ac^2,(1-a^2)^{1/2} c),(a m_c^2(x),(1-a^2)^{1/2} m_c(x)))=c_{\DD}^*(c^2, m_c^2(x)).$$ 
Put $y=m_c(x)$ and $b=(1-a^2)^{1/2}$. The last equality means that there is a geodesic $\psi$ in $\BB_2$ such that $\psi(c^2)=( ac^2,bc)$ and $\psi(y^2) = (ay^2,by)$. Let $F_{\varepsilon}$ be functions constructed in Lemma~\ref{lem:nonunique} such that $F_\varepsilon(b \la, a\la^2)=\la^2$, $0\leq \varepsilon<1$. Note that $F_\varepsilon\circ \psi$ is the identity for any $0\leq \varepsilon<1$, so due to the form of $F_{\varepsilon}$ we get the equality $a\psi_1^2 \equiv b^2 \psi_2$ on $\DD$. This means that the image of $\psi$ lies in the variety $\{ (\la, a/b^2  \la^2), \la \in \DD\}$. Since geodesics lie on affine lines we find that $a=0$. But then $c_{\BB_2}^*((0,c), (0,y)) = c_{\DD}^*(c^2, y^2),$ a contradiction.

Finally, we shall show that $0$, $\phi(x)$ and $\phi(y)$ are not colinear. Since automorphisms of $\BB_2$ map affine lines into affine lines the assertion is equivalent to an obvious fact that $(a m_c(0), bm_c^2(0))$, $(a m_c(x), b m_c^2(x))$ and $(a m_c(y), bm_c^2(y))$ do not lie on an affine line.
\end{proof}

\subsection{Openness of the range of $\Phi$}

\begin{lemma}\label{lem:open} The continuous mapping $\Phi:\Omega\to\mathcal A\setminus\mathcal C$ is two-to-one. Moreover, $\Phi(x,y,a,U, c) = \Phi(-x, -y, a, U, -c)$. 

In particular, $\Phi$ is locally injective on any domain composing of points such that $x\neq y$.
\end{lemma}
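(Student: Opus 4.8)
The goal is to show that $\Phi:\Omega\to\mathcal A\setminus\mathcal C$ is two-to-one, that the only non-trivial identification is $\Phi(x,y,a,U,c)=\Phi(-x,-y,a,U,-c)$, and to deduce local injectivity away from the diagonal $x=y$. I would organize the argument around reconstructing the parameters $(x,y,a,U,c)$ — up to the stated sign flip — from the output triple $(\varphi(x),\varphi(y),[xm_\gamma(x):ym_\gamma(y)])$.

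First I would recover the geodesic $\varphi=\varphi_{a,U,c}$ from its point images. By construction $\varphi(x),\varphi(y)$ together with $\varphi(0)=0$ lie on the range of the $3$-complex geodesic $\lambda\mapsto\chi_w(U(am_c(\lambda),(1-a^2)^{1/2}m_c^2(\lambda)))$. Since $\Phi(x,y,a,U,c)\in\mathcal A\setminus\mathcal C$ by Lemma~\ref{lem: PhisubsA}, this geodesic is a genuine $3$-extremal, not a $2$-extremal, and by the Warszawski normalization it is determined up to the obvious reparametrizations. The key point is that a non-degenerate $3$-extremal, as an unparametrized analytic disc through $0$, together with the requirement that it pass through two further prescribed points $\varphi(x),\varphi(y)$, fixes the parameters: I would use that the normal form $\lambda\mapsto(am_c(\lambda),(1-a^2)^{1/2}m_c^2(\lambda))$ is rigid modulo the automorphisms of $\DD$ fixing the geodesic structure, and that composing with $\chi_w$ and $U$ is accounted for once the three image points $0,\varphi(x),\varphi(y)$ are known. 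This should pin down $a$ and the unitary $U$ uniquely, and pin down $c$ and the pair $\{x,y\}$ up to the M\"obius symmetry of $m_c$; concretely $m_c$ is invariant under $\lambda\mapsto$ (the involution fixing $0,c$), and the replacement $c\mapsto -c$ together with $m_c(\lambda)\mapsto m_{-c}(\lambda)$ corresponds precisely to $\lambda\mapsto-\lambda$, explaining the sign flip $(x,y,a,U,c)\mapsto(-x,-y,a,U,-c)$ with $\gamma=\tfrac{2c}{1+|c|^2}$ changing sign consistently so that $xm_\gamma(x)$ is unchanged.

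Next I would check the converse direction explicitly: that $\Phi(-x,-y,a,U,-c)=\Phi(x,y,a,U,c)$, which is a direct computation — $m_{-c}(-\lambda)=-m_c(\lambda)$, hence $\varphi_{a,U,-c}(-\lambda)=\chi_{w'}(U(-am_c(\lambda),(1-a^2)^{1/2}m_c^2(\lambda)))$ with $w'=U((1-a^2)^{1/2}(-c),ac^2)$; one verifies $w'$ and $w$ give the same geodesic range and that $\varphi_{a,U,-c}(-x)=\varphi_{a,U,c}(x)$, while $(-x)m_{-\gamma}(-x)=xm_\gamma(x)$ so the projective coordinate is also unchanged. Then the fibre of $\Phi$ over any point of $\mathcal A\setminus\mathcal C$ consists of exactly the two preimages $(x,y,a,U,c)$ and $(-x,-y,a,U,-c)$, and these are distinct in $\Omega$ since $c\in(-1,1)$ forces $-c\neq c$ (note $c=0$ would make $\gamma=0$, but then the geodesic degenerates — one must check $c=0$ is excluded from, or harmless on, $\Omega$; in fact for $c=0$ the disc $\lambda\mapsto\chi_0(U(a\lambda,(1-a^2)^{1/2}\lambda^2))$ is still a genuine $3$-geodesic and $\gamma=0$, so $\Phi(x,y,a,U,0)=\Phi(-x,-y,a,U,0)$ would need $x=-x$... here I would simply observe the involution is still well-defined as a self-map of the fibre and is fixed-point-free on $\Omega$ because the pair of first two coordinates $(x,y)\neq(-x,-y)$ whenever $(x,y)\neq(0,0)$, which holds as $x,y\in\DD_*$). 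The local injectivity statement then follows formally: $\Phi$ is continuous and exactly two-to-one with the two sheets interchanged by the fixed-point-free involution $\iota(x,y,a,U,c)=(-x,-y,a,U,-c)$ of $\Omega$; on any connected open subset $V\subset\Omega$ with $V\cap\iota(V)=\emptyset$ the restriction $\Phi|_V$ is injective, and in particular $\Phi$ is locally injective near every point, since $\iota$ has no fixed points and one can always shrink a neighborhood to be disjoint from its image under $\iota$.

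**Main obstacle.** The technical heart is the first step — showing that the unparametrized $3$-extremal disc through $0,\varphi(x),\varphi(y)$, together with those two marked points, determines $(a,U)$ uniquely and $(c,\{x,y\})$ up to $c\mapsto-c$. This rests on a rigidity statement for non-degenerate $3$-extremals in $\BB_2$ that I would extract from \cite{War 2015} and \cite{Kos-Zwo 2015}: the normal form is unique modulo post-composition with $\Aut(\BB_2)$ and pre-composition with $\Aut(\DD)$, but once the base point $0$ and two further points with their projective tangent-type datum $[xm_\gamma(x):ym_\gamma(y)]$ are fixed, the residual freedom collapses to the $\ZZ_2$ generated by $\iota$. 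Making this collapse precise — ruling out extra coincidences when, say, the cross-ratio-type quantities built from $x,y,c$ happen to agree for genuinely different parameters — is where the real work lies, and I would handle it by a direct analysis of the equation $\varphi_{a,U,c}(x)=\varphi_{a',U',c'}(x')$, $\varphi_{a,U,c}(y)=\varphi_{a',U',c'}(y')$ after normalizing $\chi_w$ away, reducing to an identity between the two explicit quadratic rational discs that is then solved coordinate by coordinate.
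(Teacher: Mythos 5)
Your reduction has a genuine gap at its heart. You assert that ``a non-degenerate $3$-extremal, as an unparametrized analytic disc through $0$, together with the requirement that it pass through two further prescribed points $\varphi(x),\varphi(y)$, fixes the parameters,'' and you propose to verify this by analysing the system $\varphi_{a,U,c}(x)=\varphi_{a',U',c'}(x')$, $\varphi_{a,U,c}(y)=\varphi_{a',U',c'}(y')$. But the output of $\Phi$ does not hand you the disc; it hands you only two image points (plus the origin) and the projective datum $[xm_\gamma(x):ym_\gamma(y)]$. Through three given points of $\BB_2$ there is in general a whole one-parameter family of such $3$-geodesics (this is exactly why $\xi\in\mathbb P^1$ is part of the data, and why $\Phi(\Omega)=\mathcal A\setminus\mathcal C$ is a theorem), so equality at two points is an underdetermined system and cannot by itself force $\varphi_{a,U,c}$ and $\varphi_{a',U',c'}$ to coincide. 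The rigidity you want to quote from \cite{War 2015} and \cite{Kos-Zwo 2015} is only a normal form modulo $\Aut(\BB_2)$ and $\Aut(\DD)$; the collapse of the residual freedom to the $\ZZ_2$ generated by the sign flip, \emph{given only the finite data}, is precisely the content of the lemma, and your sketch offers no mechanism for it beyond deferring to ``direct analysis.''

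The paper supplies exactly the missing mechanism, and it is not a pointwise computation. First, because the extremal targets attached to $(z,w,\xi)$ are determined up to a unimodular constant and an extremal $3$-point problem in $\DD$ solved by a degree-$2$ Blaschke product has a unique solution, equality of the two $\Phi$-values upgrades to a functional identity on all of $\DD$: $F\circ\phi_2(\la)=m_{c_1^2}(\omega\la m_{\gamma_2}(\la))$ for every left inverse $F=m_{c_1^2}\circ F_\eps\circ U_1^{-1}\circ\chi_1$ of $\phi_1$. Second, and crucially, one exploits the whole one-parameter family $F_\eps$ of left inverses from Lemma~\ref{lem:nonunique}: since the identity holds for all small $\eps$, the $\eps$-dependent term must vanish, which forces $b_1\psi_1^2\equiv a_1^2\psi_2$ for $\psi=U_1^{-1}\circ\chi_1\circ\phi_2$ (Claim~\ref{claim:identity}), i.e.\ the second disc lies on the same quadratic variety as the first. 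Only then do the explicit evaluations (differentiating $m_{c_1^2}(\omega\la m_{\gamma_2}(\la))=\eta m_\alpha^2(\la)$ at $\alpha$, and putting $\la=0$ and $\la=1$) pin down $c_2=\pm c_1$, $\omega=\eta=1$, and the subsequent normalization recovers $\mathcal J=\id$, $\phi_1\equiv\phi_2$, $a_1=a_2$, $U_1=U_2$, $x_1=x_2$, $y_1=y_2$. Without an argument of this kind --- using the non-uniqueness of left inverses together with uniqueness of the disc-level Blaschke interpolant --- your plan does not exclude two genuinely different parameter tuples producing the same nodes and the same $\xi$. (Your verification of the symmetry $\Phi(x,y,a,U,c)=\Phi(-x,-y,a,U,-c)$ and the deduction of local injectivity from a fixed-point-free deck involution are fine, and agree with the paper.)
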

\begin{proof} It follows from the definition that the equality $\Phi(x,y,a,U,c)=\Phi(-x,-y,a,U,-c)$ holds for any $(x,y,a,U,c)\in\Omega$. What remains to be proven is to show that if 
$\Phi(x_1, y_1, a_1, U_1, c_1) = \Phi(x_2,y_2, a_2, U_2, c_2)$ then either $(x_1, y_1, a_1, U_1, c_1) = (x_2,y_2, a_2, U_2, c_2)$ or
$(x_2, y_2, a_2, U_2, c_2) = (-x_1,-y_, a_1, U_1, -c_1)$. 

Assume that $\Phi(x_1, y_1, a_1, U_1, c_1) = \Phi(x_2,y_2, a_2, U_2, c_2)$ .To simplify the notation let us denote $\phi_i=\phi_{a_i, U_i, c_i}$ and $\gamma_i = \frac{2c_i}{1+|c_i|^2}$, $b_i = (1-a^2_i)^{1/2}$, $w_i = U_i(a_i c_i, b_i c_i^2)$, $\chi_i = \chi_{w_i}$, $i=1,2.$
Notice that if the problem
\begin{equation}\nonumber
\BB_2\to\DD,\quad 
0\mapsto 0,\;
\phi_1(x_1)\mapsto x_1 m_{\gamma_1}(x_1),\;
\phi_1(y_1) \mapsto y_1 m_{\gamma_1}(y_1)
\end{equation} 
is interpolated by a function $F$, then the problem
\begin{equation}\nonumber
\BB_2\to\DD,\quad
0\mapsto 0,\;\phi_2(x_2)\mapsto x_2 m_{\gamma_2}(x_2),\;
\phi_2(y_2) \mapsto y_2 m_{\gamma_2}(y_2)
\end{equation}
is interpolated by a function $\omega F$ for some unimodular $\omega$. 
Applying this observation to $F:=m_{c_1^2} \circ F_\varepsilon \circ U_1^{{-1}}\circ \chi_{1}$, where $F_\varepsilon$ is a left inverse to $\la \mapsto (a_1 \lambda, (1-a_1^2)^{1/2}  \lambda ^2)$ constructed in Lemma~\ref{lem:nonunique}, $0\leq\varepsilon<1$, we get that there is some $\omega\in\partial\DD$ such that
\begin{equation}\label{eq:Fphi} F_\varepsilon \circ U_1^{-1} \circ \chi_1 \circ \phi_2 (\la) = m_{c_1^2}  (\omega \la m_{\gamma_2}(\la)), \quad \la\in \DD,
\end{equation} and any $\varepsilon<\sqrt 2$.
\begin{claim}\label{claim:identity}
Let $\psi\in\mathcal O(\DD,\BB_2)$. Assume that there is a Blaschke product of degree $2$ such that $$F_\varepsilon \circ \psi \equiv B$$ for any $0\leq \varepsilon<1$. Then $B\equiv\eta m^2$ for some M\"obius map $m$ and unimodular constant $\eta$. Moreover, $\psi_2 \equiv b_1 m^2$ and $\psi_1^2 \equiv a_1^2 m^2$.
\end{claim}

\begin{proof}[Proof of the claim] Since the equality $F_\varepsilon \circ \psi(\la) = B(\la)$ for $\la\in\DD$ holds for all $\eps$ small enough we get from the form of $F_{\eps}$ that $b_1 \psi_1^2 (\la) = a_1^2 \psi_2(\la)$ for $\la\in \DD$. Direct computation gives equalities 
\begin{equation}\nonumber
B(\lambda)=F_\varepsilon(\psi(\la))= F(\psi(\lambda))= \psi_1^2(\la) / a_1^2,\;\la \in \DD
\end{equation} 
and the claim follows.
\end{proof}

We come back to the proof of the lemma. Making use of the claim we see that 
\begin{equation}\label{eq: mcmalpha} m_{c_1^2} (\omega \lambda m_{\gamma_2}(\lambda)) = \eta m^2_{\alpha}(\lambda),\quad \la\in\overline{\DD},
\end{equation}
for some idempotent M\"obius map $m_\alpha$ and unimodular constant $\eta$. Differentiating \eqref{eq: mcmalpha} at the point $\alpha$ we find that $\gamma_2 = \frac{2 \alpha}{1+ |\alpha|^2}$; in particular, $\alpha\in \RR$. Putting $\lambda =0$ to \eqref{eq: mcmalpha} one gets  $c_1^2 = \eta \alpha^2$ and, therefore, $\eta=1$ and $c_1^2 = \alpha^2$. Consequently, $\gamma_2 = \pm \frac{2c_1}{1+|c_1|^2}$, which means that $c_2=\pm c_1.$ Putting $\la =1$ to \eqref{eq: mcmalpha} one gets $\omega=1$. 

Replacing, if necessary, $(x_2, y_2, c_2)$ with $(-x_2, -y_2, -c_2)$ we may assume that $c_2=c_1$. What we have to do now it so show that $x_1=x_2$, $y_1=y_2$, $U_1=U_2$ and $a_1=a_2$.

Note that Claim~\ref{claim:identity} implies that 
\begin{equation}\nonumber
U_1^{-1}\circ \chi_1 \circ \phi_2(\la)=\mathcal J\circ U_1^{-1}\circ \chi_1 \circ \phi_1(\la),\quad \la\in\DD,
\end{equation}
where $\mathcal J$ is either the identity matrix or $\mathcal J = \diag(-1, 1)$. Substituting $\la=0$ in the above equality we infer that $\mathcal J$ fixes $(a_1 c_1, b_1 c_1^2)$, whence one deduces that $\mathcal J$ is the identity. Consequently, $\phi_2 \equiv \phi_1$ so $$\chi_2 \circ U_2 (a_2 \la, b_2 \la^2) = \chi_1 \circ U_1 (a_1 \la, b_1 \la^2),\quad \la \in \DD.$$ Thus $\chi_2(0)=\chi_1(0)$, so $\chi_2 \equiv \chi_1$. Since $U_1$ and $U_2$ are isometries, the above relations imply that $||(a_1, b_1 \la)|| = ||(a_2, b_2 \la)||$ for any $\la \in \DD$. Remembering that $a_i,b_i$ are positive, we simply deduce that $a_1=a_2$, so $U_1=U_2$ trivially. Finally, the equalities $x_1=x_2$ and $y_1 = y_2$ follow immediately from the relations $\phi_1(x_1) = \phi_2 (x_2)$ and $\phi_1(y_1) = \phi_2(y_2)$.
\end{proof}


\subsection{Closedness of the range of $\Phi$}

\begin{lemma}\label{lemma:closed}
The range $\Phi(\Omega)$ is closed in $\mathcal A \setminus \mathcal C$.
\end{lemma}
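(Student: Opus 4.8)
The goal is to show that $\Phi(\Omega)$ is closed in $\mathcal A\setminus\mathcal C$. So suppose $(x_k,y_k,a_k,U_k,c_k)\in\Omega$ and $\Phi(x_k,y_k,a_k,U_k,c_k)\to(z_0,w_0,[\sigma_0:\tau_0])\in\mathcal A\setminus\mathcal C$. We must produce a point of $\Omega$ mapping to this limit. The first step is a compactness argument: since $\uU_2(\CC)$ is compact and $a_k\in[0,1)$, $c_k\in(-1,1)$, after passing to a subsequence we may assume $U_k\to U_0\in\uU_2(\CC)$, $a_k\to a_0\in[0,1]$, and $c_k\to c_0\in[-1,1]$. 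Similarly, since the target data converge and the projective coordinate $[x_k m_{\gamma_k}(x_k):y_k m_{\gamma_k}(y_k)]$ converges, one extracts convergent subsequences $x_k\to x_0$, $y_k\to y_0$ in $\overline{\DD}$. The whole content of the lemma is then to rule out every way in which the limit parameter $(x_0,y_0,a_0,U_0,c_0)$ can fail to lie in $\Omega$, i.e. to exclude $a_0\in\{0,1\}$, $c_0\in\{-1,1\}$, $x_0$ or $y_0\in\{0\}\cup\partial\DD$, and $x_0=y_0$, using the hypothesis that the limit lies in $\mathcal A\setminus\mathcal C$ (in particular, that the limiting Pick problem is non-degenerate and that $0,z_0,w_0$ are not colinear).

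\textbf{Key steps in order.} First I would observe that the maps $\phi_{a_k,U_k,c_k}$ converge locally uniformly on $\DD$ to $\phi_{a_0,U_0,c_0}$ provided $c_0\in(-1,1)$ (the formula for $\phi_{a,U,c}$ is continuous in all parameters on the region where $|c|<1$), so $\phi_{a_k,U_k,c_k}(x_k)\to\phi_{a_0,U_0,c_0}(x_0)=z_0$ and likewise $w_0=\phi_{a_0,U_0,c_0}(y_0)$, provided the relevant points stay in the open disc. The exclusion of boundary/degenerate limits is where the hypotheses get used, and I would treat the cases as follows. If $a_0=0$, then $\phi_{0,U_0,c_0}(\la)=\chi_{w_0}(U_0(0,m_{c_0}^2(\la)))$ traces a complex line (a $2$-extremal), which would force $0,z_0,w_0$ to be colinear, contradicting that the limit avoids $\mathcal C$. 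If $a_0=1$, then $b_0=0$ and $\phi_{1,U_0,c_0}(\la)=\chi_{w_0}(U_0(m_{c_0}(\la),0))$ again traces a complex line, the same contradiction. If $c_0\in\{\pm1\}$ or $x_0\in\partial\DD$ or $y_0\in\partial\DD$ or $x_0=y_0$, then the interpolation data $0\mapsto0$, $z_0\mapsto\sigma_0$, $w_0\mapsto\tau_0$ degenerates: either two of the three target points coincide in a way that (together with Proposition~\ref{proposition-left-inverse} applied to the pre-limit problems and a normal-families argument on the left inverses $F_{\varepsilon_k}$) forces a $2$-point subproblem to become extremal in the limit, contradicting non-degeneracy; or $\gamma_k=\frac{2c_k}{1+|c_k|^2}\to\pm1$ makes $\la m_{\gamma_k}(\la)$ degenerate so that the limiting dual Blaschke product has degree $\le1$, again forcing $2$-extremality. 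In each subcase one invokes the characterisation of $\mathcal B$ from Theorem~\ref{theorem-degenerate} (or directly Lemma~\ref{lem: colinear}) to see that landing in $\mathcal A$ is impossible. Having excluded all bad limits, $(x_0,y_0,a_0,U_0,c_0)\in\Omega$, and continuity of $\Phi$ (already noted) gives $\Phi(x_0,y_0,a_0,U_0,c_0)=(z_0,w_0,[\sigma_0:\tau_0])$, proving closedness.

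\textbf{Main obstacle.} The delicate point is the boundary behaviour when $c_0\to\pm1$ or $x_0,y_0$ approach $\partial\DD$: here $\phi_{a_k,U_k,c_k}$ need not converge nicely (the idempotent automorphism $\chi_{w_k}$ can blow up as $\|w_k\|\to1$), so one cannot simply pass to the limit in the maps. The right way around this is to argue entirely on the target side: use the continuity of $(z,w,\sigma,\tau)\mapsto t_{z,w,(\sigma,\tau)}$ recalled in Section~\ref{sec: refolmulation} to control the limiting extremal Pick problem, and show that any limit of the data $(z_k,w_k,[\sigma_k:\tau_k])$ which has a degenerating geodesic parameter must itself be degenerate or colinear — i.e. lie in $\mathcal B\cup\mathcal C$, hence outside $\mathcal A\setminus\mathcal C$. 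Equivalently, one shows the set of geodesic parameters producing a fixed non-degenerate, non-colinear target stays in a compact subset of $\Omega$; combined with Lemma~\ref{lem:open} (properness would follow from the $2$-to-$1$ structure plus this compactness) this yields closedness. I expect the bulk of the write-up to be the careful case analysis excluding $a_0\in\{0,1\}$ and $c_0=\pm1$, each reduced to a colinearity statement handled by Lemma~\ref{lem: colinear}.
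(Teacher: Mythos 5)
Your overall architecture coincides with the paper's: extract a subsequential limit $(x_0,y_0,a_0,U_0,c_0)\in\overline{\Omega}$ of the parameters and use the hypothesis that the limit lies in $\mathcal A\setminus\mathcal C$ to exclude every boundary value; the exclusion of $a_0\in\{0,1\}$ (the limit disc degenerates to an affine line, forcing colinearity, i.e.\ membership in $\mathcal C$) and, once $a_0\in(0,1)$ and $c_0\in(-1,1)$ are known, of $x_0,y_0\in\{0\}\cup\partial\DD$ and $x_0=y_0$, is indeed routine. The genuine gap is at the heart of the lemma: you never actually prove that $c_0=\pm1$ leads to a contradiction, and the mechanisms you invoke would not do it. When $c_n\to 1$ the maps $\phi_n$ and their left inverses do not converge ($\chi_{w_n}$ degenerates since $\|w_n\|\to1$, as you yourself observe), and $\la m_{\gamma_n}(\la)\to\la$ only locally uniformly on $\DD$ while the parameters $x_n,y_n$ may tend to $\partial\DD$; hence there is no well-defined ``limiting dual Blaschke product of degree $\le 1$'' attached to the limit problem, and in the critical configuration no two target points need coincide. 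A normal-families limit of the left inverses $F_{\varepsilon}\circ U_n^{-1}\circ\chi_{w_n}$ merely produces an interpolating function for the limit problem, which by itself yields no degeneracy statement.

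What is required (and what the paper does) is a case analysis on the position of $x_0,y_0\in\overline{\DD}$ when $\gamma_n\to1$, carried out on the target side via the Carath\'eodory distance, after first noting that $\sigma_0,\tau_0\in\DD$ and that the limit three-point problem is extremal: (i) if $x_0=y_0=1$, then $c_{\DD}^*(\sigma_0,\tau_0)=\lim c_{\DD}^*(m_{\gamma_n}(x_n),m_{\gamma_n}(y_n))=\lim c_{\DD}^*(x_n,y_n)\ge\lim c_{\BB_2}^*(z_n,w_n)=c_{\BB_2}^*(z_0,w_0)$, which together with the reverse trivial inequality makes the subproblem $z_0\mapsto\sigma_0$, $w_0\mapsto\tau_0$ extremal, contradicting non-degeneracy; (ii) if $x_0\in\DD$ (or $y_0\in\DD$), then $m_{\gamma_n}(x_n)\to1$, so $\sigma_0=x_0$ and the subproblem $0\mapsto0$, $z_0\mapsto\sigma_0$ is extremal, again a contradiction; (iii) if $x_0,y_0\in\partial\DD$ and, say, $x_0\neq1$, then $|\sigma_0|=1$, contradicting $\sigma_0\in\DD$. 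Your proposal flags this boundary behaviour as the delicate point and proposes to ``argue on the target side,'' which is the right instinct, but it defers exactly this analysis; as written, the exclusion of $c_0=\pm1$ is an assertion rather than a proof, so the argument is incomplete precisely where the lemma has content.
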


\begin{proof} We are using the notation as in Proposition~\ref{proposition-left-inverse}. 
Let a sequence
 $((x_n, y_n, a_n, U_n, c_n))$ in $\Omega$, convergent to $(x_0,y_0, a_0, U_0, c_0)$ in $\bar \Omega$, be such that the sequence $(\Phi(x_n, y_n , a_n, U_n, c_n))=:((x_n,y_n,\xi_n))$ is convergent to $(z_0, w_0, \xi_0)\in \mathcal A \setminus \mathcal C.$ Put $\sigma_n:= x_n m_{\gamma_n} (x_n)$ and $\tau_n:= y_n m_{\gamma_n}(y_n)$. Assume additionally that sequences $(\sigma_n)$ and $(\tau_n)$ are convergent to $\sigma_0$ and $\tau_0$. We easily get that $\sigma_0,\tau_0\in\DD$. Additionally, it follows from the continuity argument that the problem
\begin{equation}\nonumber
\BB_2\to\DD,\quad 0\mapsto 0,\; z_0\mapsto \sigma_0,\; w_0\mapsto \tau_0
\end{equation}
is extremal.

And once more since the sequence $(\Phi(x_n,y_n,a_n,U_n,c_n))$ converges to an element from $\mathcal A\setminus\mathcal C$ we get that $a_0\in(0,1)$.

Note that if we show that $c_0\in(-1,1)$ then we are done.

Therefore, we may assume that $c_n$ converges to $1$ (the case $c_0=-1$ follows from fact that $\Phi$ is even with respect to the variables $x,y,c$). In such a case the sequence $(\gamma_n)$ tends to $\gamma_0=1$. We shall consider three cases. 

The first one is when $x_0=y_0=1$. Then 
\begin{multline}\nonumber c_{\DD}^*(\sigma_0, \tau_0)= \lim c_{\DD}^*(m_{\gamma_n}(x_n), m_{\gamma_n}(y_n)) =
\\ \lim  c_{\DD}^* (x_n, y_n) \geq\lim  c_{\BB_2}^*(z_n, w_n) = c_{\BB_2}^*(z_0, w_0).
\end{multline}
On the other hand we have the trivial inequality $c_{\DD}^*(\sigma_0, \tau_0) \leq c_{\BB_2}^*(z_0,w_0)$. Both these relations imply that the the $2$-point problem 
\begin{equation}\nonumber
\BB_2\to\DD,\; z_0\mapsto \sigma_0,\; w_0\mapsto \tau_0
\end{equation}
is extremal, so $(z_0, w_0, \xi_0)\in \mathcal B$; a contradiction.

Now suppose that $x_0\in \DD$. Then $\sigma_0 = x_0$, which implies that the $2$-point problem $0\mapsto 0$, $z_0\mapsto \sigma_0$ is extremal - contradiction, either. If $y_0\in \DD$ we proceed similarly.

We are left with the case when $x_0,y_0\in\partial\DD$ and either $x_0\neq 1$ or $y_0\neq 1$. But this means that either $\tau_0$ or $\sigma_0$ does not lie in the unit disc, which leads to a contradiction, either.
\end{proof}


\subsection{Connectedness of non-degenerate set}

\begin{lemma}\label{lem: connectfiber}
Suppose that $z,w\in \BB_2$ do not lie on a line passing through $0$. Then the set of points $\xi=[\sigma:\tau]$ such that $(z,w,\xi)\in \mathcal A$ is connected.
\end{lemma}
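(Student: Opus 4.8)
The plan is to fix $z,w\in\BB_2$ not lying on a line through $0$ and study the set $S:=\{\xi=[\sigma:\tau]\in\PP^1:(z,w,\xi)\in\mathcal A\}$ inside $\PP^1$. By definition $\mathcal A=\mathcal D_2\setminus\mathcal B$, so $S$ is obtained from $\PP^1$ by removing the "degenerate fiber'' $S^c=\{\xi:(z,w,\xi)\in\mathcal B\}$. First I would translate degeneracy into the effective description already available: after composing with an automorphism of $\BB_n$ carrying $0,z,w$ into $\BB_2\times\{0\}$, the degeneracy of the extremal $3$-point Pick problem corresponding to $(z,w,\xi)$ means that some $2$-point subproblem is extremal, and by Lemma~\ref{lem: colinear} two of the three subproblems being extremal forces colinearity of $0,z,w$ --- which is excluded by hypothesis. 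Hence for $(z,w,\xi)\in\mathcal B$ exactly one $2$-point subproblem is extremal, and it is one of the two subproblems involving the node $0$ (the subproblem on $\{z,w\}$ cannot be the extremal one, since the value at $0$ is still free and a Montel argument gives an interpolant whose image is relatively compact unless one of the other pairs is saturated). This splits $S^c$ into two pieces: $S^c_z=\{\xi:\ 0\mapsto0,\ z\mapsto t\sigma\ \text{extremal}\}$ and $S^c_w$ defined analogously with $w$.

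Next I would show each of $S^c_z$ and $S^c_w$ is a single point of $\PP^1$, or at worst a connected arc, so that $S=\PP^1\setminus(S^c_z\cup S^c_w)$ is connected. Here the key tool is the rescaling normalization from Section~\ref{sec: refolmulation}: for fixed nodes $z,w$ the map $[\sigma:\tau]\mapsto(t_{z,w,(\sigma,\tau)}\sigma,t_{z,w,(\sigma,\tau)}\tau)$ is a homeomorphism of $\PP^1$ onto the set of extremal target triples modulo unimodular constants, and $t$ depends continuously on all data. The subproblem $0\mapsto0$, $z\mapsto t\sigma$ is extremal precisely when $c^*_\DD(0,t\sigma)=c^*_{\BB_2}(0,z)$, i.e. when $t|\sigma|=\|z\|$ in the Carath\'eodory sense; combined with the fact (Lempert/ the $N=2$ discussion in Section~\ref{sec: N=2}) that in this saturated case the whole extremal disc and its target are rigidly determined, this pins down the ratio $[\sigma:\tau]$: once $z\mapsto$ the Carath\'eodory-extremal target, the disc through $0$ and $z$ is the complex geodesic, and the value it forces at $w$ is unique modulo unimodular constant. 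So $S^c_z$ is a single point, and likewise $S^c_w$. Since $0,z,w$ are not colinear these two points are distinct, and $\PP^1$ minus two points is connected (in fact connected even minus one or two points), which gives the claim.

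The main obstacle I expect is the middle step: ruling out that the $\{z,w\}$-subproblem is the extremal one, and showing that ``$0\mapsto 0$, $z\mapsto t\sigma$ extremal'' cuts out only a single $\xi$ rather than a larger set. For the first point one uses that in the extremal problem $0,z\mapsto 0,t\sigma$ with the $w$-value free, if no pair containing $0$ were saturated then a Montel/normal-families argument as in Section~\ref{sec: refolmulation} produces an interpolant with relatively compact image, contradicting extremality --- so indeed one of the $0$-pairs must be tight. For the second point, the subtlety is that a priori ``$\{0,z\}$ extremal'' only fixes $|\sigma|$ via $t|\sigma|=c^*_{\BB_2}(0,z)/\sqrt{\cdots}$, not the ratio with $\tau$; the resolution is that extremality of the full $3$-point problem together with tightness of $\{0,z\}$ forces the interpolating $F$ to be the Carath\'eodory extremal for $(0,z)$ (a Blaschke-of-degree-one composed with a complex geodesic), hence $F$ is essentially unique up to the rotation in the disc, and therefore $F(w)=t\tau$ is determined up to a unimodular constant --- this is exactly the rigidity already spelled out for $N=2$ in Section~\ref{sec: N=2}, applied fiberwise. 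Alternatively, one can invoke Theorem~\ref{theorem-degenerate}: it identifies $\mathcal B(w)$ (after normalization) as a single closed Poincar\'e disc, and pulling this back through the homeomorphism $\xi\mapsto$ target shows $S^c_z$ (resp. $S^c_w$) is the preimage of the boundary data for which the $z$-pair (resp. $w$-pair) saturates, a connected set whose complement in $\PP^1$ is connected; I would use whichever of these two routes is cleanest once the normalization is set up.
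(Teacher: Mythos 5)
There is a genuine gap, and it occurs exactly at the step you flag as the ``middle step''. Your claim that the $\{z,w\}$-subproblem can never be the saturated one is false: the value at $0$ is not free (it is prescribed to be $0$), and saturation of the pair $\{z,w\}$ by itself already forbids any interpolant with relatively compact image. Concretely, let $f$ be the complex geodesic of $\BB_2$ through $z$ and $w$, let $F$ be a left inverse to $f$, and set $G:=m_{F(0)}\circ F$; then $G(0)=0$ and $c^*_{\DD}(G(z),G(w))=c^*_{\BB_2}(z,w)$, so the $3$-point problem $0\mapsto 0$, $z\mapsto G(z)$, $w\mapsto G(w)$ is extremal and degenerate with the $\{z,w\}$-pair tight, while by Lemma~\ref{lem: colinear} (non-colinearity of $0,z,w$) neither pair containing $0$ is tight. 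So the degenerate fiber consists of \emph{three} pieces, not two, and the piece you discard is precisely the one the paper has to work hardest on: its proof shows that the set of $\xi$ for which the $\{z,w\}$-subproblem is extremal is simply connected, by reducing to the map $\psi(\sigma)=(\sigma-x_1|\sigma|^2)/(1-|\sigma|^2)$, checking its real Jacobian is positive and that it is proper, hence a covering, so that images of closed Poincar\'e discs are simply connected. Without this third piece your decomposition of the complement is incomplete and the connectivity conclusion does not follow; the disjointness of the three pieces (Lemma~\ref{lem: colinear}) is also needed to see that removing their union from $\mathbb P^1$ leaves a connected set.

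A second error is the claim that $S^c_z$ and $S^c_w$ are single points. This contradicts Theorem~\ref{theorem-degenerate}: once the pair $\{0,z\}$ is saturated, the admissible values at $w$ fill a closed disc of positive radius (positive because $w$ does not lie on the line through $0$ and $z$), reflecting the fact that left inverses to complex geodesics in the ball are highly non-unique --- the functions $F_{\tau,\omega}$ of \eqref{class F11} are exactly such a family --- so the rigidity you invoke from the $N=2$ discussion determines the geodesic but not $F(w)$. Your fallback route via Theorem~\ref{theorem-degenerate} does repair this particular step (it is what the paper uses to identify the two pieces containing $0$ as closed discs), but the missing treatment of the $\{z,w\}$-saturated set remains a fatal gap in the argument.
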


\begin{proof}
To prove the assertion we shall show that points $\xi$ for which $(z,w,\xi)\in \mathcal B$ form a union of three disjoint simply-connected and closed sets.

Two of them are closed discs consisting of points $\xi=[\sigma:\tau]$ such that one of the following $2$-point interpolation subproblems $$0\mapsto 0,\; z\mapsto t_{z,w,(\sigma, \tau)}\sigma\quad \text{ or }\quad  0\mapsto 0,\; w\mapsto t_{z,w,(\sigma, \tau)}\tau$$ is extremal (use Theorem~\ref{theorem-degenerate}).

To get the assertion it thus suffices to show that the set of $\xi$ such that $$z\mapsto t_{z,w,(\sigma, \tau)}\sigma,\; w\mapsto t_{z,w,(\sigma, \tau)}\tau$$ is extremal is simply connected. The reason for this is that all the sets constructed here are disjoint, according to Lemma~\ref{lem: colinear}.

Let us compose nodes and points with proper automorphisms so that we are in a position that allows us to apply Theorem~\ref{theorem-degenerate} again. Then one can see that we aim at describing $[\sigma: \tau]$ such that $m_\sigma (\tau)= ||x||=x_1>0$ (and thus $x_2=0$) and $c_{\DD}^*(\tau,z_1)\leq\frac{|z_2|^2}{2-2|z_1|^2-|z_2|^2}=:r$, where $x = (x_1, x_2)= (x_1,0)= U(\chi_z (w))$ with suitably chosen unitary matrix $U$.

But then $\tau=\frac{\sigma-x_1}{1-x_1\bar\sigma}$. What we need is to show that the set
\begin{equation}\nonumber
\left\{\frac{\sigma-x_1}{\sigma-x_1\bar\sigma\sigma}:\sigma\in\DD,\;c_{\DD}^*(\sigma,z_1)\leq r\right\}\subset\hat{\mathbb C}
\end{equation}
is simply connected
(if $\sigma=0$ the fraction with $0$ in the denominator is understood to be $\infty$).

We shall prove it. Since 
\begin{equation}\nonumber
\frac{\sigma-x_1}{\sigma-x_1\bar\sigma\sigma}=1-x_1\frac{1-|\sigma|^2}{\sigma-x_1|\sigma|^2}
\end{equation}
we easily reformulate the problem.

Consider the function
\begin{equation}\nonumber
\psi:\DD\owns\sigma\mapsto\frac{\sigma-x_1|\sigma|^2}{1-|\sigma|^2}\in\mathbb C.
\end{equation}
Since any closed Poincar\'e disc in $\DD$ is is the Euclidean disc to finish the proof of the lemma it is sufficient to show that for any closed disc $K\subset\DD$ the set $\psi(K)$ is simply connected.

To prove it note that the (real) Jacobian of $\psi$ equals 
\begin{equation}\nonumber
\frac{1+|\sigma|^2-2\operatorname{Re}(x_1\overline{\sigma})}{(1-|\sigma|^2)^3}>0,\;\sigma\in\DD.
\end{equation}
Consequently $\psi$ is a local diffeomorphism. Note also that $\psi$ is proper onto its image (equal to $\mathbb C$). Consequently, $\psi$ is a finite topological covering, which easily implies the desired conclusion.
\end{proof}

\begin{lemma}\label{lem: connect}
The set of non-degenerate points $(z,w,\xi)$ is connected.
\end{lemma}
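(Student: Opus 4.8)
The plan is to first establish connectedness of $\mathcal A\setminus\mathcal C$ by a fibration argument over the base $\mathcal D_n$, and then to pass to $\mathcal A$ by a density argument. To begin, I would record that a point of $\mathcal D_n\times\mathbb P^1$ lies in $\mathcal C$ precisely when $0,z,w$ are colinear, a condition which does not involve $\xi$; thus $\mathcal C=\mathcal C_0\times\mathbb P^1$, where $\mathcal C_0=\{(z,w)\in\mathcal D_n:\ z_iw_j=z_jw_i\ \text{for all}\ i,j\}$ is a proper analytic subset of the connected complex manifold $\mathcal D_n$. Consequently $\mathcal D_n\setminus\mathcal C_0$ is again a connected complex manifold.

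Next I would consider the projection $\pi\colon\mathcal A\setminus\mathcal C\to\mathcal D_n\setminus\mathcal C_0$, $(z,w,\xi)\mapsto(z,w)$. Since $\mathcal A$ is open and $\mathcal C$ is closed, $\mathcal A\setminus\mathcal C$ is open in $\mathcal D_n\times\mathbb P^1$, so $\pi$ is an open map; it is also surjective, because for $(z,w)\notin\mathcal C_0$ the fibre $\{\xi:(z,w,\xi)\in\mathcal A\}$ is the complement in $\mathbb P^1$ of the corresponding fibre of $\mathcal B$, which by the proof of Theorem~\ref{theorem-degenerate} (cf. Lemma~\ref{lem: connectfiber}, which also uses Lemma~\ref{lem: colinear}) is a disjoint union of three closed topological discs, and three disjoint closed discs cannot cover the $2$-sphere. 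By Lemma~\ref{lem: connectfiber} this fibre is moreover connected. Then I would invoke the elementary fact that an open surjection onto a connected space all of whose fibres are connected has connected total space: if $\mathcal A\setminus\mathcal C=V_1\sqcup V_2$ were a separation into nonempty open sets, then $\pi(V_1)$ and $\pi(V_2)$ would be nonempty open sets covering the connected space $\mathcal D_n\setminus\mathcal C_0$, hence would meet at some $(z,w)$, and then the fibre of $\pi$ over $(z,w)$ would be split by $V_1,V_2$ into two nonempty relatively open subsets, contradicting its connectedness. Hence $\mathcal A\setminus\mathcal C$ is connected.

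Finally I would pass from $\mathcal A\setminus\mathcal C$ to $\mathcal A$. As $\mathcal C_0$ is a proper analytic subset it is nowhere dense; concretely, given $(z_0,w_0,\xi_0)\in\mathcal A$ one perturbs $z_0$ to some $z_0'$ with $(z_0',w_0)\notin\mathcal C_0$, and for $z_0'$ close enough to $z_0$ the triple $(z_0',w_0,\xi_0)$ still lies in the open set $\mathcal A$, hence in $\mathcal A\setminus\mathcal C$. Therefore $\mathcal A\setminus\mathcal C$ is dense in $\mathcal A$, and since any set lying between a connected set and its closure is connected, $\mathcal A$, i.e. the set of non-degenerate points $(z,w,\xi)$, is connected.

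The genuine content here is concentrated in Lemma~\ref{lem: connectfiber} (connectedness and nonemptiness of the fibres of $\pi$), which itself rests on the explicit description of $\mathcal B$ provided by Theorem~\ref{theorem-degenerate}; by comparison the fibration step and the density step are soft point-set topology and present no real difficulty.
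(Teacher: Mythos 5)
Your proof is correct and takes essentially the same route as the paper: project $\mathcal A\setminus\mathcal C$ onto $(z,w)$, use openness of this projection together with Lemma~\ref{lem: connectfiber} for connectedness of the fibres, and handle $\mathcal C$ via its analyticity. You merely make explicit details the paper leaves implicit (surjectivity of the projection, i.e.\ nonemptiness of the fibres, and the final density step from $\mathcal A\setminus\mathcal C$ to $\mathcal A$), so nothing further is needed.
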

\begin{proof}
Note that the set $\mathcal C_1$ of $(z,w)$ lying on the same complex line is analytic, co removing it does not affect the connectedness.

Thus the result is a consequence of the trivial fact that the natural projection $$\mathcal A\setminus \mathcal C  \ni (z,w,\xi)\mapsto (z,w) \in \mathcal D_n \setminus \mathcal C_1$$ is open and Lemma~\ref{lem: connectfiber} which says, that fibers of the above projection are connected.
\end{proof}

\begin{proof}[Proof of Theorem~\ref{theorem-non-degenerate}]
It follows from Lemmas~\ref{lem:open} and \ref{lemma:closed} that the range of the mapping $\Phi$ is open and closed in $\mathcal A \setminus \mathcal C$ which is connected, according to Lemma~\ref{lem: connect}.
\end{proof}

\section{Proof of Theorem~\ref{theorem-main}}
\begin{proof}[Proof of Theorem~\ref{theorem-main}]
The existence of $f$ as in the theorem easily implies the extremality of the $3$-point Pick problem. So assume the problem formulated in the theorem is extremal.

First recall that if the problem
\begin{equation}\label{eq: proofthm}
\BB_n \to \DD,\quad z_j\mapsto \mu_j: = F(z_j),\; j=1,2,3,
\end{equation}
is degenerate, then by Lempert's theorem there is a complex geodesic $f$ in $\BB_n$ passing through (at least) two of the nodes such that $F\circ f$ is a M\"obius map (see Section~\ref{sec: N=2}). Thus the theorem in this case is clear.

Let us assume that \eqref{eq: proofthm} is non-degenerate. As mentioned in Section~\ref{sec: refolmulation} one may assume that $z_3=0$ and $\la_3=0$.

It is clear that the assertion is clear if $(z_1, z_2, [\mu_1 : \mu_2])$ lies in the range of $\Phi,$ which is equal to $\mathcal A\setminus \mathcal C$ (see Theorem~\ref{theorem-non-degenerate}).

Therefore, it suffices to show that the theorem holds if $(z_1, z_2, [\mu_1: \mu_2])$ is in $\mathcal C$. Recall that this means that $z_1$ and $z_2$ lie on a complex line. Let $z_j=\la_j v$, $j=1,2$, for some $v\in\mathbb C^n$, $||v||=1$. Let $U$ be a unitary matrix such that $U(1,0,\ldots,0)=v$. Define $B(\la):=F(U(\la,0,\ldots,0))$ and $f(\la):=U(\la,0,\ldots,0)$, $\la\in\DD$. $f$ interpolates the extremal $3$-point problem
\begin{equation}\nonumber
\DD\to\BB_n,\;0\mapsto 0,\;\la_j\to\la_j v,\quad j=1,2.
\end{equation}
Then the problem
\begin{equation}\nonumber
\DD\to\DD,\quad 0\mapsto 0=B(0),\; \la_j\mapsto B(\la_j),\quad j=1,2,
\end{equation}
is extremal. Actually, otherwise there would be a $g\in\OO(\DD,\DD)$ with $g(0)=0$, $g(\la_j)=B(\la_j)=F(z_j)$, $j=1,2,$ and $g(\DD)$ lying relatively compactly in $\DD$. Then define $G$ with $G(z):=g(\pi(U^{-1}(z)))$, $z\in\BB_n$, where $\pi$ is the projection onto the first variable. Then $G\in \mathcal O(\BB_n,\DD)$, $G(0)=0$, $G(z_j)=F(z_j)$, $j=1,2$ and $G(\BB_n)$ lies relatively compactly in $\DD$, which contradicts the extremality of $F$. Therefore, $B$ is a Blaschke product of degree at most $2$ which completes the proof.
\end{proof}

\section{Relations with the Green function with two poles}
For a domain $D\subset\mathbb C^n$, $p,q,z\in D$, $p\neq q$ define
$l_D(p;q;z)$ as the minimum of three values
\begin{align*}
\inf\{|\lambda\sigma|:\exists{f}\in\OO(\DD,D):f(0)=z,f(\la)=p,f(\sigma)=q\},\\
\inf\{|\la|:\exists{f}\in\OO(\DD,D):f(0)=z,f(\la)=p\},\\
 \inf\{|\la|:\exists{f}\in\OO(\DD,D):f(0)=z,f(\la)=q\}.
\end{align*}
We also define
\begin{equation}\nonumber
c_D(p;q;z):=\sup\{|F(z)|:F\in\OO(D,\DD),F(p)=F(q)=0\}.
 \end{equation} 

Then $c_D\leq l_D$. Let $D$ be additionally bounded and $z\neq p,q$. Then we may find an $F\in\OO(D,\DD)$ with $F(p)=F(q)=0$ and $F(z)=c_D(p,q,z)=:\tau$. Note that the $3$-point Pick problem
 \begin{equation}\nonumber
 D\to\DD,\quad p\mapsto 0,\; q\mapsto 0,\; z\mapsto\tau
 \end{equation}
 is then extremal and $F$ interpolates it. It is then simple that the existence of a holomorphic $f:\DD\to D$ passing through three points $p,q$ and $z$ such that $F\circ f$ is a non-Blaschke product of degree at most two or the existence of a holomorphic $f:\DD\to D$ passing through at least two of the points $p,q,z$ such that $F\circ f$ is a non-constant Blaschke product of degree one (note that in the later case $f$ must necesserily pass through $z$!) implies the equality
\begin{equation}\label{eq:lem-car}
c_D(p;q;z)=l_D(p;q;z).
\end{equation} 
The above equality implies that the Green function with logarithmic poles at $p$ and $q$ at $z$ coincides with both $\log c_D(p;q;z)=\log l_D(p;q;z)$ and thus this shows that in such a case the conjecture of Coman with two poles does hold (see \cite{Com 2000}).
  
In view of Theorem~\ref{theorem-main} the equality (\ref{eq:lem-car}) holds for $D=\BB_n$ and thus this gives another proof showing that the conjecture of Coman holds for the unit ball with two logarithmic poles (see \cite{Com 2000}, \cite{Edi-Zwo 1998}).

\end{document}